\newcommand{\crown}[2]{\mathit{Crown}(#1,#2)}
\newtheorem{theorem}{Theorem}
\newtheorem{proposition}[theorem]{Proposition}
\newtheorem{question}{Question}
\newtheorem{conjecture}[theorem]{Conjecture}
\newtheorem{lemma}[theorem]{Lemma}
\def\AA{{\cal A}}
\def\ZZ{\mathbb{Z}}
\def\GC{\ensuremath{G^\circ}}
\def\indeg{\textrm{indeg}}
\def\inst#1{$^{#1}$}
\date{}
\title{Coloring Circle Arrangements:
	New~4-Chromatic~Planar Graphs\footnote{
		M.-K.\ Chiu was supported  by ERC StG 757609.
		S.\ Felsner was supported by DFG Grant  FE~340/13-1.
		M.\ Scheucher was supported 
		by the DFG Grant SCHE~2214/1-1.
		R.\ Steiner was supported by DFG-GRK 2434 and by an ETH Zurich Postdoctoral Fellowship.
		B.\ Vogtenhuber was supported by the FWF project \mbox{I 3340-N35}.
		This work was initiated at a workshop of the collaborative DACH project
		\emph{Arrangements and  Drawings} in Malchow, Mecklenburg-Vorpommern.
		We thank the organizers and all the participants for the  inspiring atmosphere. }
}
\begin{document}
	
\author{
	Man-Kwun Chiu\inst{1}
	\and
	Stefan Felsner\inst{2}
	\and
	Manfred Scheucher\inst{2}
	\and
	Felix Schröder\inst{2}
	\and
	Raphael Steiner\inst{2,4}
	\and
	Birgit Vogtenhuber\inst{3}
}

\maketitle
\vspace{-1.cm}
\begin{center}
	{\footnotesize
		\inst{1}
		Institut f\"ur Informatik,\\
		Freie Universit\"at Berlin, Germany,\\
		\texttt{chiumk@zedat.fu-berlin.de}
		\\\ \\
		\inst{2}
		Institut f\"ur Mathematik,\\
		Technische Universit\"at Berlin, Germany,\\
		\texttt{\{felsner,scheucher,fschroed,steiner\}@math.tu-berlin.de}
		\\\ \\
		\inst{3}
		Institute of Software Technology,\\
		Graz University of Technology, Austria,\\
		\texttt{bvogt@ist.tugraz.at}
		\\\ \\
		\inst{4} Institute of Theoretical Computer Science,\\
		Department of Computer Science,\\
		ETH Z\"{u}rich, Switzerland,\\
		\texttt{raphaelmario.steiner@inf.ethz.ch}
		\\\ \\
	}
\end{center}

\begin{abstract}

Felsner, Hurtado, Noy and Streinu (2000) conjectured that 
arrangement graphs of simple great-circle arrangements have chromatic number at	most $3$. 
Motivated by this conjecture, we study the colorability of arrangement graphs for different classes of arrangements of (pseudo-)circles. 

In this paper the conjecture is verified for \emph{$\triangle$-saturated} pseudocircle 
arrangements, i.e., for arrangements where one color class of the 2-coloring
of faces consists of triangles only, as well as for further classes of (pseudo-)circle arrangements. These results are complemented by a construction which
maps $\triangle$-saturated arrangements with a pentagonal face
to arrangements with 4-chromatic 4-regular arrangement graphs.  
This \emph{corona} construction has similarities with the
\emph{crowning} construction introduced by Koester (1985).
Based on exhaustive experiments with small arrangements we propose
three strengthenings of the original conjecture.

We also investigate fractional colorings. 
It is shown that the arrangement graph of every arrangement $\mathcal{A}$ of pairwise intersecting pseudocircles is ``close'' to being $3$-colorable. 
More precisely, the fractional chromatic number $\chi_f(\mathcal{A})$ of the arrangement graph is bounded from above by $\chi_f(\mathcal{A}) \le 3+O(\frac{1}{n})$, where $n$ is the number of pseudocircles of $\mathcal{A}$. 
Furthermore, we construct an infinite family of $4$-edge-critical $4$-regular planar graphs which are fractionally $3$-colorable. 
This disproves a conjecture of Gimbel, K\"{u}ndgen, Li, and Thomassen (2019).
\end{abstract}

\section{Introduction}
\label{sec:intro}

An \emph{arrangement of pseudocircles} is a family of simple closed curves on the
sphere or in the plane such that each pair of curves intersects at most twice.
Similarly, an \emph{arrangement of pseudolines} is a family of $x$-monotone curves
such that every pair of curves intersects exactly once.  An arrangement is
\emph{simple} if no three pseudolines/pseudocircles intersect in a common
point and \emph{intersecting} if every pair of pseudolines/pseudocircles
intersects.  Given an arrangement of pseudolines/pseudocircles, the
\emph{arrangement graph} is the planar graph obtained by placing vertices at
the intersection points of the arrangement and thereby subdividing the
pseudolines/pseudocircles into edges. 

A \emph{proper coloring} of a graph assigns a color to each vertex such that
no two adjacent vertices have the same color.  The \emph{chromatic number}
$\chi$ of a graph is the smallest number of colors needed for a proper coloring of the graph. 
For an arrangement $\mathcal{A}$, we denote the chromatic number of (the arrangement graph of) $\mathcal{A}$ by $\chi(\mathcal{A})$.

The famous Four Color theorem and also Brook's theorem imply the $4$-colorability of
planar graphs with maximum degree $4$; hence also every arrangement graph is properly 4-colorable. 
This motivates the following question: which arrangement graphs have chromatic number 4 and which can be properly colored with fewer than four colors? 

There exist arbitrarily large non-simple line arrangements that require 4
colors.  For example, the construction depicted in Figure~\ref{fig:nonsimple} 
contains the Moser spindle 
as subgraph which has chromatic number 4. Hence the construction cannot be properly 3-colored.  Using an inverse central (gnomonic) projection which maps lines to great-circles,
one gets a non-simple arrangement $\AA$ of great-circles with $\chi(\AA)=4$ for any such line arrangement. 
Therefore, we restrict our attention to simple arrangements in the following.

Koester~\cite{Koester1985} presented a simple arrangement $\mathcal{A}$ of 7 circles with
$\chi(\mathcal{A})=4$ in which all but one pair of circles intersect; see
Figure~\ref{fig:Koester2} in Section~\ref{subsec:corona}.  
Moreover, there also exist simple intersecting arrangements that require 4 colors.  
We invite the reader to verify this property for the example depicted in Figure~\ref{fig:n5_pwi_chi4_1}.

\begin{figure}[htb]
  \hbox{} \hfill
	\begin{subfigure}[b]{.4\textwidth}
		\centering
		\includegraphics[page=3]{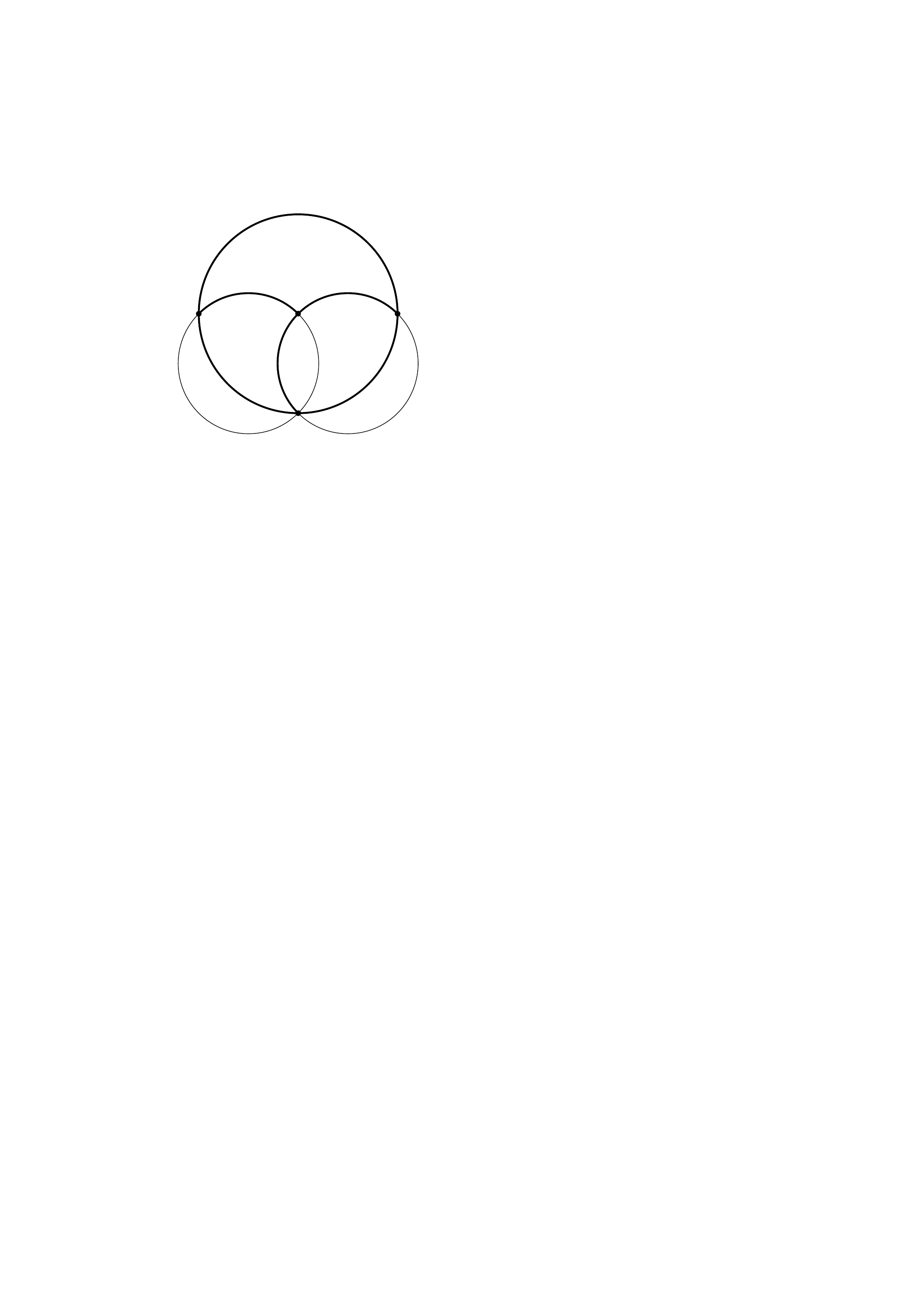}
		\caption{}
		\label{fig:nonsimple}
	\end{subfigure}
	\hfill
	\begin{subfigure}[b]{.4\textwidth}
		\centering
		\includegraphics[width=0.9\textwidth]{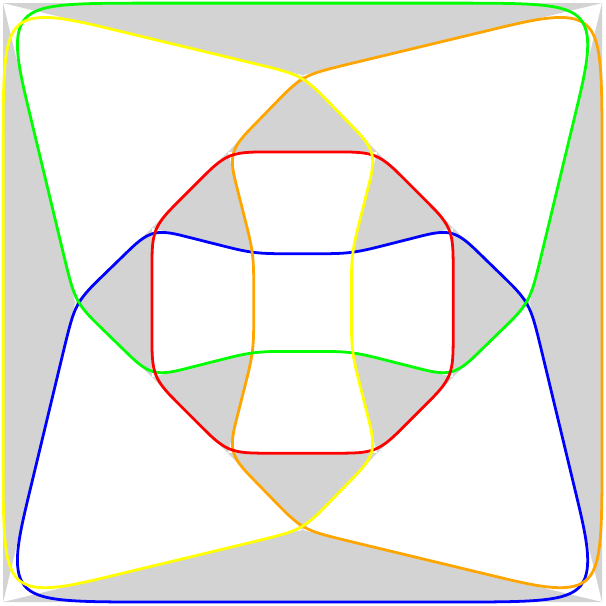}
		\caption{}
		\label{fig:n5_pwi_chi4_1}
	\end{subfigure}
	\hfill
	\hbox{}
	\caption{ \subref{fig:nonsimple}~A 4-chromatic non-simple line
          arrangement.  The red subarrangement not intersecting the Moser
          spindle (highlighted blue) can be chosen arbitrarily.
          \subref{fig:n5_pwi_chi4_1}~A simple intersecting arrangement of 5
          pseudocircles with $\chi=4$ and $\chi_f=3$.  }
	\label{fig:nonsimchi4}
\end{figure}

In 2000, Felsner, Hurtado, Noy and Streinu~\cite{FHNS2000} (cf.\
\cite{FelsnerHNS2006}) studied arrangement graphs of
pseudoline and pseudocircle arrangements. They obtained results regarding
connectivity, Hamiltonicity, and colorability of those graphs. In
that work, they also stated the following conjecture:

\begin{conjecture}[Felsner et al.\ \cite{FHNS2000,FelsnerHNS2006}]
  \label{conj:FHNS}
  The arrangement graph of every simple arrangement of great-circles on the sphere is
  3-colorable.
\end{conjecture}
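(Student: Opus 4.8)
The plan is to approach Conjecture~\ref{conj:FHNS} by the usual route for $3$-coloring sparse planar graphs — a minimal counterexample, a list of reducible configurations, and a discharging argument — but powered by the two features that distinguish simple great-circle arrangements from arbitrary intersecting pseudocircle arrangements, which need not be $3$-colorable (Figure~\ref{fig:n5_pwi_chi4_1}): any two great circles meet in exactly two antipodal points, and the whole arrangement is invariant under the antipodal map of the sphere. For a simple arrangement $\AA$ of $n\ge 3$ great circles the arrangement graph $G=G(\AA)$ is a simple $4$-regular plane graph, hence Eulerian, so its faces admit a proper $2$-coloring; call one class \emph{black}. The antipodal map acts freely on $G$, and the quotient is the arrangement graph $\bar G$ of a simple arrangement of $n$ lines in the projective plane; a proper $3$-coloring of $\bar G$ pulls back along the covering $G\to\bar G$ to an antipodally symmetric proper $3$-coloring of $G$, so it is enough to $3$-color $\bar G$, a $4$-regular graph in $\mathbb{RP}^2$. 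Passing to a counterexample minimizing first $n$ and then the number of vertices, the small cases are handled by the exhaustive computations reported in this paper, so $n$ may be assumed large; below I phrase the argument on $G$ (equivalently $\bar G$).

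\emph{Reductions.} The basic operation is \emph{circle removal}: delete a great circle $c$, $3$-color $G(\AA\setminus c)$ by minimality, and reinsert $c$. Reinserting $c$ subdivides exactly the faces in its \emph{zone} and creates a cyclic sequence $w_1,\dots,w_k$ of new vertices, one per crossing of $c$ with $\AA\setminus c$, where $w_i$ is joined to $w_{i-1},w_{i+1}$ along $c$ and to the two already-colored ends $x_i,y_i$ of the edge it subdivides. Thus we must list-color the cycle $w_1\cdots w_k$ with lists $L_i=\{1,2,3\}\setminus\{\mathrm{col}(x_i),\mathrm{col}(y_i)\}$ of size $1$ or $2$; this succeeds unless the forced (size-$1$) vertices propagate to a color clash around the cycle. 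Arranging that no such clash occurs — through the choice of which circle to delete and which $3$-coloring of $\AA\setminus c$ to start from — is the crux, and exactly where great-circle geometry must enter, since for general pseudocircle arrangements the clash genuinely occurs. Alongside this I would assemble a catalogue of small reducible configurations (a $4$-face in a controlled local neighborhood that can be recolored; a triangle adjacent to a larger face across which a Kempe swap frees a color) and show that none appears in the minimal counterexample.

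\emph{Discharging.} Euler's formula for the $4$-regular graph $G$ gives $\sum_{k}(4-k)f_k=8$, where $f_k$ counts the $k$-faces, so triangles are abundant. By the $\triangle$-saturated case of the conjecture established in this paper, the minimal counterexample cannot have all its black faces triangular; since $G$ has no digons for $n\ge 3$, it therefore contains a black face of size at least $4$, and — as the corona construction shows that a pentagonal face is precisely what turns a $\triangle$-saturated arrangement into a $4$-chromatic one — the real targets are black faces of size at least $5$. Assigning each $k$-face charge $k-4$ and each vertex charge $0$, the total charge is $-8$; the reducibility lemmas would then be used to route the positive charge of each large face to nearby triangles, whose presence near such a face is to be forced in a \emph{great-circle} arrangement by a sweep of the corresponding projective line arrangement, leaving every final charge nonnegative — the desired contradiction.

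\emph{Main obstacle.} I expect the decisive difficulty to be exactly this injection of great-circle structure. The corona construction produces, from any $\triangle$-saturated arrangement with a pentagonal face, a $4$-chromatic $4$-regular arrangement graph, so no purely combinatorial argument about $4$-regular plane graphs can suffice: the proof must use that great circles are \emph{straight} — equivalently stretchable and centrally symmetric — to exclude the very pentagonal configurations the corona construction feeds on, or to supply the extra triangles needed to neutralize them in the discharging. Turning the antipodal symmetry and stretchability from qualitative constraints into quantitative discharging input is, I believe, where the real difficulty lies; and one cannot shortcut it through the Four Color Theorem, since merging two color classes of a $4$-coloring need not produce an independent set.
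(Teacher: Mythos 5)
The statement you are addressing is a \emph{conjecture}: the paper does not prove it, and it remains open (the paper only verifies it for $\triangle$-saturated arrangements, for a few other constructed families, and computationally for up to $11$ great-pseudocircles). So there is no ``paper proof'' to compare against, and your text should be judged as a standalone proof attempt. As such, it is not a proof: it is a research programme whose central step is explicitly left unresolved. You yourself identify the crux --- after deleting a great circle $c$ and $3$-coloring the rest by minimality, the reinsertion forces a list-coloring of the cycle of new vertices with lists of size $1$ or $2$, and you say that arranging for this to succeed ``is the crux, and exactly where great-circle geometry must enter.'' No argument is given for why it succeeds; no reducible configuration is actually shown reducible; the discharging is described only as something you ``would'' do. Since the corona construction in this very paper produces $4$-chromatic $4$-regular arrangement graphs of pseudocircles, any proof must genuinely exploit the great-circle (or at least great-pseudocircle) structure, and your sketch never converts the antipodal symmetry or stretchability into a usable combinatorial restriction.

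Two further specific concerns. First, your appeal to the $\triangle$-saturated case only tells you a minimal counterexample has some non-triangular black face; it does not tell you such a face sits in any controlled local configuration, and the paper's own examples (e.g.\ the $4$-chromatic intersecting arrangement of Figure~\ref{fig:n5_pwi_chi4_1}) show that ``a large black face surrounded by triangles'' is exactly the dangerous pattern, not a reducible one. Second, the reduction to the projective quotient $\bar G$ is sound (antipodal crossings are non-adjacent for $n\ge 3$, so the $3$-coloring pulls back properly), and it is consistent with the paper's Conjecture~\ref{conjecture:strenghened}\ref{conjecture:antipodal3colorable} on antipodal colorings --- but it strictly strengthens what you need to prove, since $\bar G$ being $3$-chromatic is equivalent to $G$ having an \emph{antipodal} $3$-coloring, which is conjectured only for $n\ge 7$ and is a priori harder than plain $3$-colorability. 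In short: the approach is a reasonable outline of how one might attack the conjecture, but the decisive lemma is missing, so the conjecture is not proved.
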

      
While this conjecture is fairly well known (cf.\
\cite{OPG_website_3colorconj,Kalai_blog_coloring_problems,Wagon2002} and
\cite[Chapter~17.7]{Wagon2010book}) there has been little progress in the last
20 years.  Aichholzer, Aurenhammer, and Krasser verified the conjecture for up
to 11 great-circles \cite[Chapter~4.6.4]{Krasser2003}.  
They did not explicitly mention ``non-realizable'' arrangements, i.e., arrangements of pseudocircles that cannot be realized by great-circles despite fulfilling all necessary combinatorial properties of great-circle arrangements (see below for details).
We have re-generated the data from \cite[Chapter~4.6.4]{Krasser2003} for arrangements of up to 11
great-circles (cf.\ \cite{ScheucherSS2020}) and verified the conjecture also for
non-realizable arrangements of the same size, by this confirming it for all
arrangements of up to 11 \emph{great-pseudocircles}.
\emph{Arrangements of great-pseudocircles} are defined as arrangements of pairwise intersecting
pseudocircles where along each pseudocircle, the sequence
of the $2n-2$ intersections with the other pseudocircles is ($n-1$)-periodic.
Equivalently, the induced subarrangement of every three pseudocircles
only has triangular faces.  

\paragraph*{Results and outline}

In Section~\ref{sec:3colarr} we discuss two infinite families of
3-colorable arrangements.  

The first is the family of
$\triangle$-saturated arrangements of pseudocircles: A plane graph is \emph{$\triangle$-saturated} if  every edge is incident to exactly one triangular face, an arrangement is \emph{$\triangle$-saturated} if its arrangement graph is $\triangle$-saturated.
The second family is based on a specific construction which replaces a
pseudocircle by a bundle of three pseudocircles and preserves 3-colorability.

In Section~\ref{sec:corona} we extend our study of $\triangle$-saturated
arrangements and present an infinite family of arrangements which require $4$
colors. We believe that the construction results
in infinitely many $4$-vertex-critical arrangement graphs. 
A $k$-chromatic graph is \emph{$k$-vertex-critical} 
if the removal of every vertex decreases the chromatic number.  
It is \emph{$k$-edge-critical} if the removal of every edge decreases the chromatic number.
One of the arrangements which can be obtained with our construction is
Koester's arrangement of 7 circles~\cite{Koester1985}; see
Figure~\ref{fig:Koester2} in Section~\ref{subsec:corona}. 
Koester obtained his example using a ``crowning'' operation, which actually yields infinite
families of $4$-edge-critical $4$-regular planar graphs.  However, except for
the initial 7 circles example, these graphs are not arrangement graphs of arrangements of pseudocircles.

In Section~\ref{sec:fractional} we investigate the fractional chromatic number
$\chi_f$ of arrangement graphs. Roughly speaking, this variant of the chromatic number is the
objective value of the linear relaxation of the ILP formulation for the
chromatic number\footnote{The exact definition of the fractional chromatic number is deferred to Section~\ref{sec:fractional}}.  
We show that intersecting arrangements of pseudocircles are
``close'' to being $3$-colorable by proving that
$\chi_f(\mathcal{A}) \le 3+O(\frac{1}{n})$ 
for any intersecting arrangement $\mathcal{A}$ of $n$ pseudocircles.

In their work about the fractional chromatic number of planar graphs, 
Gimbel, K\"{u}ndgen, Li, and Thomassen conjectured that every $4$-chromatic planar
graph has fractional chromatic number strictly greater
than~$3$~\cite[Conjecture~3.2]{GimbelKLT2019}. They argued that a positive
answer to this statement would yield an alternative proof of the Four Color
Theorem.  In Section~\ref{sec:gimbel}, we present an example of a
$4$-edge-critical arrangement graph which is fractionally $3$-colorable.  The
example is the basis for constructing an infinite family of $4$-regular planar
graphs which are $4$-edge-critical and fractionally $3$-colorable.  This
disproves the conjecture of Gimbel et al.~in a strong form.

We conclude this paper with a discussion in Section~\ref{sec:discussion}, where we also propose three 
strengthened versions of Conjecture~\ref{conj:FHNS} which are
supported by exhaustive experiments with small arrangements.

\section{Families of 3-colorable arrangements of pseudocircles}
\label{sec:3colarr}

In this section we present two classes of arrangements of pseudocircles which
are 3-colorable.

\subsection{$\triangle$-saturated arrangements are 3-colorable}
\label{sec:trianglesat}

Recall that an arrangement is \emph{$\triangle$-saturated} if every edge of
the arrangement graph is incident to exactly one triangular face.  Figure~\ref{fig:trianglesat} shows
some examples of $\triangle$-saturated arrangements of pseudocircles.
We show that $\triangle$-saturated arrangements are 3-colorable.
This verifies Conjecture~\ref{conj:FHNS} for a class of
great-pseudocircle arrangements. 

Note, however that not all $\triangle$-saturated arrangements are great-pseudocircle arrangements;
For example the first two arrangements in Figure~\ref{fig:trianglesat} are not.
To see this, consider the subarrangement of the black, blue, and red pseudocircle in each of the two arrangements.

\begin{figure}[!htb]
	\centering
	\hbox{}
	\hskip3mm
	\begin{subfigure}[b]{.29\textwidth}
		\centering
		\includegraphics[page=1,width=\textwidth]{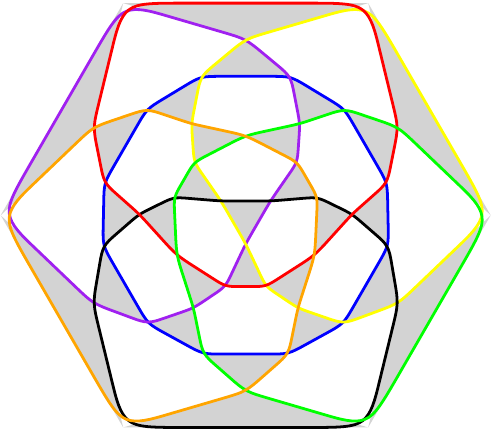}
	\end{subfigure}
	\hfill
	\begin{subfigure}[b]{.26\textwidth}
		\centering
		\includegraphics[page=1,width=\textwidth]{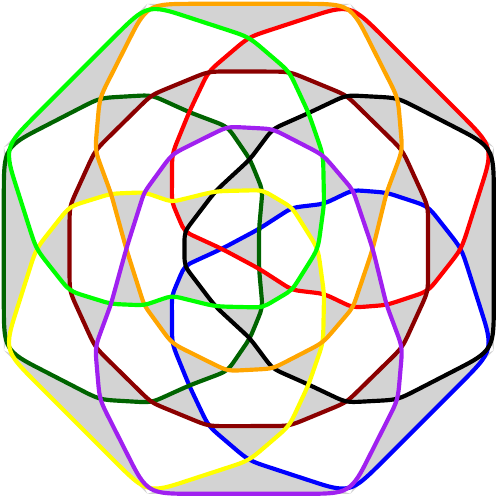}
	\end{subfigure}
	\hfill
	\begin{subfigure}[b]{.28\textwidth}
		\centering
		\includegraphics[page=1,width=\textwidth]{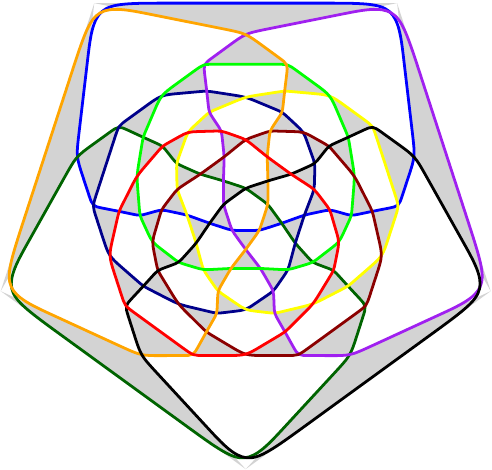}
	\end{subfigure}
	\hskip3mm
	\hbox{}
	\caption{$\triangle$-saturated intersecting  arrangements with 7, 9, and 10 pseudocircles.
	}
	\label{fig:trianglesat}
\end{figure}

\begin{theorem}
	\label{thm:3_colorable_arr}
	Every simple $\triangle$-saturated  arrangement $\AA$ of pseudocircles is $3$-colorable.
\end{theorem}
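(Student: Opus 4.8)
The key structural observation is that a $\triangle$-saturated arrangement graph is $4$-regular (each vertex lies on two pseudocircles and has degree $4$), and the $2$-coloring of the faces puts every edge on exactly one ``white'' triangle; call a face in the other color class ``black''. So the white triangles partition the edge set, and each vertex is incident to exactly two white triangles. Contracting each white triangle to a single vertex yields a new plane graph $G^\circ$ (or one can dually think of the triangles as vertices of a graph where two triangles are adjacent if they share a vertex of $\AA$). The plan is to $3$-color $\AA$ by first understanding this auxiliary graph, then lifting a combinatorial structure (a proper coloring, or an orientation/matching) back to the vertices of the arrangement.

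Concretely, I would proceed as follows. First I would set up the white-triangle structure: because $\AA$ is simple and $\triangle$-saturated, every vertex $v$ of the arrangement graph is the apex of exactly two white triangles, and the four edges at $v$ are split $2+2$ between them. This means that if we orient each edge toward ``its'' triangle-corner in a consistent way, or equivalently if we look at the graph $H$ whose vertices are the white triangles and whose edges correspond to vertices of $\AA$ (each arrangement-vertex is shared by exactly two white triangles), then $H$ is a planar graph in which I want to find a proper $3$-coloring; such a coloring assigns one of $3$ colors to each white triangle, and I then color each vertex $v$ of $\AA$ by... this does not immediately work, since a vertex sees two triangles. The cleaner route is: I would instead try to properly $3$-color the vertices of $\AA$ directly by exploiting that each white triangle, being a $K_3$, forces its three vertices to get all three colors — so a $3$-coloring of $\AA$ is exactly a system of colorings of the white triangles that is consistent at shared vertices, i.e.\ a proper $3$-coloring of an associated constraint graph. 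I would argue that this constraint graph is itself a planar (indeed, I expect, triangle-free or otherwise sparse) graph and hence $3$-colorable, for instance via Grötzsch's theorem or a direct Euler-formula/discharging count showing it has a vertex of degree $\le 2$ that can be removed inductively.

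The step I expect to be the main obstacle is showing that the associated constraint graph (the ``triangle adjacency'' structure) really does satisfy a hypothesis strong enough to guarantee $3$-colorability — bare planarity is not enough. I would look for one of two things: either (i) an Euler-formula argument, using that $\AA$ is $4$-regular and the white faces are all triangles, to show the black faces are on average large enough that the triangle-graph is sparse (average degree $< 6$, or even $< 4$, giving a degree-$\le 3$ or degree-$\le 2$ vertex for induction); or (ii) a parity/rotation argument showing the triangle-graph is bipartite-like or has no odd obstruction, so that Grötzsch applies. A careful local analysis at each arrangement vertex — tracking how the two incident white triangles and two incident black faces interleave around $v$ — should yield the needed count. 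Once a small-degree vertex (or reducible configuration) is identified, the induction is routine: delete it, $3$-color the rest, and extend, using $4$-regularity to guarantee a free color.
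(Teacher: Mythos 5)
You set up exactly the right auxiliary object --- the graph $H$ whose vertices are the white triangles and whose edges are the vertices of $\AA$ (each arrangement vertex lying in exactly two white triangles) --- but you abandon it at the decisive moment. The missing idea is that one should \emph{3-edge-color} $H$, not vertex-color it. Since every white triangle has three vertices of $\AA$, each shared with exactly one other white triangle, $H$ is cubic; it is bridgeless because the arrangement graph is 2-connected (a bridge of $H$ would correspond to a cut vertex of its medial graph). By Tait's theorem, which together with the Four Color Theorem gives that every bridgeless cubic planar graph is 3-edge-colorable, $H$ admits a proper 3-edge-coloring. Now transfer colors: each vertex $v$ of $\AA$ is an edge of $H$, and two adjacent vertices of $\AA$ lie on a common white triangle (this is exactly $\triangle$-saturation: the edge between them is incident to one triangular face), so the corresponding edges of $H$ share an endpoint of $H$ and therefore received distinct colors. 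This yields a proper 3-coloring of the arrangement graph.

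Your two proposed escape routes cannot be made to work. The ``constraint graph'' whose proper colorings are the consistent systems of triangle-colorings is not a vertex-coloring problem at all: written out, a 3-coloring of $\AA$ is precisely a proper 3-edge-coloring of the cubic graph $H$ (the triangle constraint is the all-distinct constraint on the three edges at a vertex of $H$, and consistency at shared arrangement vertices is then automatic). So route (i) --- an Euler-formula count producing a low-degree vertex for induction --- and route (ii) --- Gr\"otzsch or a parity/bipartiteness argument --- are aimed at the wrong object; moreover, 3-edge-colorability of bridgeless cubic planar graphs is \emph{equivalent} to the Four Color Theorem, so no sparseness, degeneracy, or triangle-freeness argument of the kind you describe can replace it. The paper's proof is exactly the edge-coloring argument above and is short once Tait's theorem is invoked.
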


\begin{proof}
  Let $H$ be a graph whose vertices correspond to the triangles of $\AA$ and
  whose edges correspond to pairs of triangles sharing a vertex of $\AA$.
  This graph $H$ is planar and 3-regular.  Moreover, since the arrangement
  graph of $\AA$ is 2-connected, $H$ is bridgeless.  Now Tait's theorem, a
  well known equivalent of the 4-color theorem, asserts that $H$ is
  3-edge-colorable, see e.g.~\cite{Aig87} or~\cite{Thomas98}. The edges of $H$
  correspond bijectively to the vertices of the arrangement~$\AA$ and, since
  adjacent vertices of $\AA$ are incident to a common triangle, the
  corresponding edges of $H$ share a vertex.  This shows that the graph of
  $\AA$ is $3$-colorable.
\end{proof}

The maximum number of triangles in arrangements of pseudolines and
pseudocircles has been studied intensively, see
for example \cite{Gruenbaum1972,Roudneff1986,Blanc2011} and the recent work
\cite{FelsnerScheucher2020}.  

By recursively applying the ``doubling method'',
Harborth \cite{Harborth1985}, Roudneff \cite{Roudneff1986}, and Blanc
\cite{Blanc2011} proved the existence of $\triangle$-saturated arrangements of
$n$ pseudolines for infinitely many values of $n \equiv 0,4 \pmod 6$.
Similarly, a doubling construction for arrangements of
\mbox{(great-)}pseudocircles yields infinitely many $\triangle$-saturated
arrangements of \mbox{(great-)}pseudocircles.  Figure~\ref{fig:doubling}
illustrates the doubling method applied to an arrangement of
great-pseudocircles.  Note that for $n \equiv 2 \pmod 3$ there is no
$\triangle$-saturated intersecting pseudocircle arrangements because the
number of edges of the arrangement graph equals 3 times the number of
triangles but the number of edges is $2n(n-1)$ which is not divisible by~3.

\begin{figure}[htb]
	\hbox{}
	\hskip7mm
	\begin{subfigure}[b]{.42\textwidth}
		\centering
		\includegraphics[page=1,width=\textwidth]{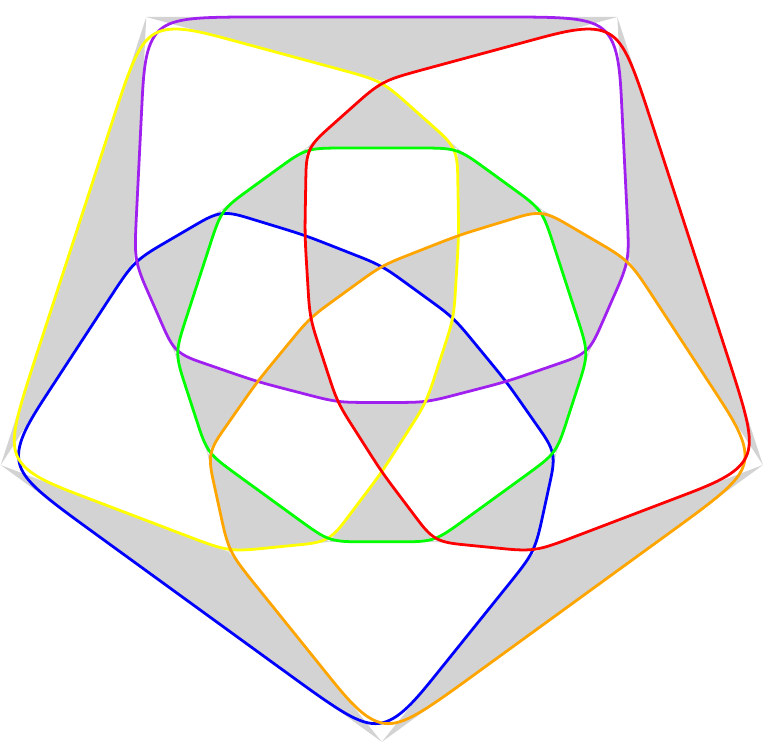}
		\caption{}
		\label{fig:doubling1}
	\end{subfigure}
	\hfill
	\begin{subfigure}[b]{.42\textwidth}
		\centering
		\includegraphics[page=2,width=\textwidth]{figs/Harborth_doubling}
		\caption{}
		\label{fig:doubling2}
	\end{subfigure}	
	\hskip7mm
	\hbox{}
	\caption{ The doubling method applied to an arrangements of 6
          great-pseudocircles. The red pseudocircle is replaced by a cyclic
          arrangement.}
	\label{fig:doubling}
\end{figure}

The proof of Theorem~\ref{thm:3_colorable_arr} actually can be extended to a
larger class of graphs (cf.\ Theorem~\ref{thm:3_colorable_gr}). Before stating
the result we need some more definitions.

The \emph{medial graph} $M(G)$ of an embedded planar graph $G$ is a graph
representing the adjacencies between edges in the cyclic order of vertices and
faces, respectively: The vertices of $M(G)$ correspond to the edges in $G$.
Two vertices of $M(G)$ share an edge whenever their corresponding edges in $G$ are
adjacent along the boundary of a face of $G$ (and hence consecutive around a
vertex; vertices of degree 1 and 2 in $G$ induce loops and multi-edges, respectively, in $M(G)$).
Note that every medial graph is a 4-regular planar graph. 
Vice-versa, every 4-regular planar graph is the medial graph of some planar graph. 

In order to see that the latter statement is true for connected graphs, let $H$ be a 
$4$-regular connected
embedded planar graph, and consider its dual graph $H^\ast$.
Since $H$ is $4$-regular and hence 
Eulerian, $H^\ast$ is a bipartite graph.
Next consider the $2$-coloring of the
faces of~$H$ which is induced by the bipartition of $H^\ast$, say, with colors
gray and white. Pick one of the color classes, e.g., the gray faces, and
create a new plane graph $G$ as follows: $G$ has exactly one vertex placed in
the interior of every gray face of $H$, and two vertices $u$ and $v$ of~$G$
are connected via an edge if and only if their corresponding gray faces touch
at a vertex~$x$. In this case, the edge $uv$ is drawn in $G$ in such a way
that it connects $u$ to $v$ by crossing through~$x$ and staying within the
union of the gray faces corresponding to $u$ and $v$ otherwise. From this
construction it is now easy to see that $G$ is a plane graph satisfying
$M(G)=H$, and every such graph $G$ is referred to as a \emph{premedial graph} of
$H$. By picking the white instead of the gray faces in the above construction,
we would have obtained another premedial graph of $H$, namely the dual graph
$G^\ast$ of $G$. While this shows that reconstructing $G$ from $M(G)$ is in
general not possible, it can be seen that $H$ determines a primal-dual pair
$\{G,G^*\}$ of premedial graphs uniquely up to isomorphism.
Figure~\ref{fig:premedial} shows an example of a medial graph and its two premedial graphs.

   \calc_figscale{20}
    \begin{figure}[htb]
    \centerline{\input{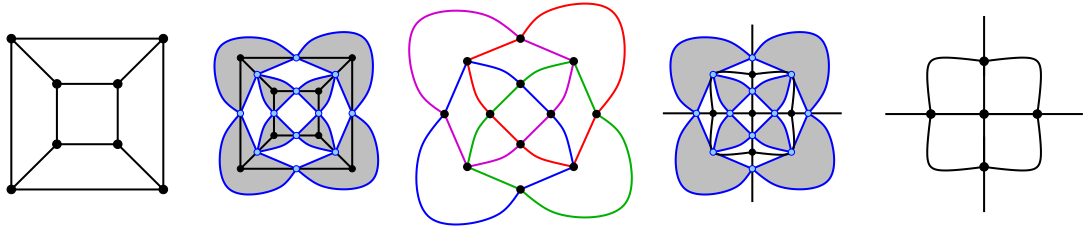}}
    \caption{The cube graph (left) and its medial graph $G_\AA$ (middle). The graph
	$G_\AA$ is also the graph of an arrangement of four pseudocircles; as indicated by the edge colors. 
	The second premedial graph of $G_\AA$ is the octahedron graph (right).\label{fig:premedial}}
    \end{figure}

Note that if $G$ is the graph of an arrangement of pseudocircles, then $G$ is 
4-regular while its dual graph $G^*$ has vertices of degree $\leq 3$. Hence, in this
case we can identify $G$ in the pair of premedial graphs given by $M(G)$.

In the other direction, an arrangement graph $G$ has a cubic premedial graph 
-- the graph $H$ in the proof of Theorem~\ref{thm:3_colorable_arr} --
if and only if $G$ is $\Delta$-saturated.
Moreover, the proof of Theorem~\ref{thm:3_colorable_arr} does not require that the 4-regular graph $G$ 
under consideration is actually an arrangement graph. It just requires it to be 2-connected to ensure that the cubic premedial graph $H$ is bridgeless. 
Hence the following theorem generalizes Theorem~\ref{thm:3_colorable_arr}, while essentially having the same proof.

\begin{theorem}\label{thm:3_colorable_gr}
  If $G$ is a 2-connected 4-regular planar graph which has a cubic premedial
  graph $H$ then $\chi(G)=3$.
\end{theorem}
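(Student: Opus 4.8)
The plan is to re-run the proof of Theorem~\ref{thm:3_colorable_arr} almost verbatim, with ``triangles of $\AA$'' replaced by ``vertices of $H$'' and ``vertices of $\AA$'' replaced by ``edges of $H$''. Fix an embedding of $H$ realizing $M(H)=G$. Since $H$ is cubic, at every vertex of $H$ any two of the three incident edges are consecutive in the rotation, hence consecutive along a facial walk; therefore two vertices of $G=M(H)$ are adjacent \emph{if and only if} the corresponding edges of $H$ share an endpoint, and the three edges at any vertex of $H$ induce a triangle in $G$. In particular $\chi(G)\ge 3$, so it remains to exhibit a proper $3$-coloring.

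The only point that is not pure bookkeeping is that $H$ is bridgeless, and this is exactly where the hypothesis that $G$ is $2$-connected enters. First, $H$ is connected because $G$ is. Now suppose $e=uv$ were a bridge of $H$. In the embedding the same face $F$ borders $e$ on both sides, so no face other than $F$ has $e$ on its boundary, and the facial walk of $F$ meets $e$ exactly twice, splitting into a part using only edges of the component of $H-e$ containing $u$ and a part using only edges of the component containing $v$; both parts are nonempty since $u$ and $v$ each carry two further edges of $H$. Consequently, any two edges of $H$ that are consecutive on some facial walk either lie in the same component of $H-e$, or one of them is $e$. Translating through the adjacency dictionary above, deleting from $G$ the vertex corresponding to $e$ disconnects it into the (nonempty) ``$u$-side'' and ``$v$-side'' vertex sets, i.e.\ that vertex is a cut vertex of $G$ --- contradicting $2$-connectivity. (As a byproduct, a bridgeless cubic graph is loopless, so no degenerate cases remain; parallel edges in $H$ cause no trouble.)

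Finally, Tait's theorem applied to the bridgeless cubic planar graph $H$ yields a $3$-edge-coloring of $H$. Transporting it along the bijection between the edges of $H$ and the vertices of $G$ gives a vertex coloring of $G$ in which adjacent vertices, corresponding to edges of $H$ meeting at a vertex, receive distinct colors, i.e.\ a proper $3$-coloring. Together with $\chi(G)\ge 3$ this yields $\chi(G)=3$. The main obstacle --- essentially the only place that needs care --- is making the correspondence ``bridge of $H$ $\Longleftrightarrow$ cut vertex of $M(H)$'' precise; everything else is inherited unchanged from the proof of Theorem~\ref{thm:3_colorable_arr}.
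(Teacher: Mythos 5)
Your proof is correct and follows the same route as the paper: bridge of $H$ $\Leftrightarrow$ cut vertex of $M(H)$, hence $H$ is bridgeless, Tait's theorem gives a $3$-edge-coloring, and transporting it along the edge--vertex bijection yields a proper $3$-coloring of $G$. You merely spell out two points the paper leaves implicit, namely the detailed verification of the bridge/cut-vertex correspondence and the lower bound $\chi(G)\ge 3$ from the triangles at the cubic vertices of $H$.
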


We remark that 2-connectivity is a crucial condition in
Theorem~\ref{thm:3_colorable_gr} as illustrated in
Figure~\ref{fig:1connected_premedial}.

\begin{figure}[htb]
	\centering
	\includegraphics[width=0.3\textwidth]{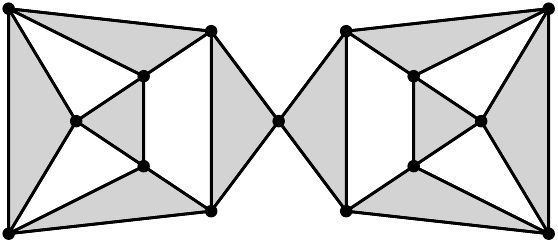}
	\caption{A connected 4-regular planar graph $G$ with a cubic premedial graph and $\chi(G)=4$}
	\label{fig:1connected_premedial}
\end{figure}

\begin{proof}
  Let $H$ be the cubic premedial graph of~$G$, i.e., $G=M(H)$. A bridge in~$H$
  corresponds to a cut vertex of its medial graph~$G$. Since $G$ is assumed to
  be 2-connected, it follows that $H$ is bridgeless and hence, by Tait's
  theorem, 3-edge-colorable. Adjacent vertices of $G$ correspond to edges
  of~$H$ that are consecutive in the circular order at a vertex of~$H$.  As
  such pairs of edges receive different colors in the edge-coloring of~$H$,
  the 3-edge-coloring of $H$ induces a 3-coloring of~$G$.
\end{proof}

It follows from the above discussion that $\chi(M(H))$ is upper bounded
by the \emph{chromatic index}~$\chi'(H)$, i.e., the minimum number
of colors required for a proper edge coloring of~$H$. Indeed, if $v$ is a vertex
of $H$, then edges incident to $v$ require pairwise distinct colors
in an edge coloring, while in $M(H)$ these edges are vertices along the boundary of a facial cycle
so that repetitions of colors might be feasible.

\subsection{More families of 3-colorable arrangements}
\label{sec:another_construction}

We next show how to construct more infinite families of 3-colorable arangements of 
(intersecting) pseudocircles, great-pseudocircles, or circles, respectively.

Let $\AA$ be a 3-colorable arrangement of $n$ pseudocircles
and let $\phi$ be a coloring of $\AA$ with colors $0,1,2$.
We will use the additive structure of $\ZZ_3$ on the colors.

Fix a pseudocircle $C$ of~$\AA$ and let $V_I$ and $V_O$ be the sets of vertices
of $\AA$ inside and outside of~$C$, respectively.  Let $\AA'$ be the
arrangement obtained from $\AA$ by adding two parallel pseudocircles~$C'$ and
$C''$ along $C$, i.e., the order in which the three pseudocircles $C$, $C'$, and
$C''$ cross the other pseudocircles is the same. We can think of the parallel pseudocircles
as drawn close to $C$ such that~$C$ is the innermost, $C'$ the middle, and
$C''$ the outer of the three pseudocircles. For every vertex $v\in C$, we have the
corresponding vertices $v'$ and $v''$ on $C'$ and $C''$ respectively.
Formally, this correspondence can be stated by saying that $vv'$ and $v'v''$ are edges of $\AA'$, and edges $vw$ with $w\in V_O$ of $\AA$ are replaced by $v''w$ in $\AA'$.

The following defines a 3-coloring $\phi'$ of $\AA'$: 
For $u \in V_I$ let
$\phi'(u) = \phi(u)$; for a triple $v,v',v''$ of corresponding vertices on the
three pseudocircles $C,C',C''$, let $\phi'(v) = \phi(v)$,  $\phi'(v') = \phi(v)+1$, and
$\phi'(v'') = \phi(v)+2$; finally for $w \in V_O$ let $\phi'(w) = \phi(w)+2$.

Since we are mostly interested in intersecting arrangements we next describe
how to transform $\AA'$ into a 3-colorable intersecting arrangement $\AA''$.
Let $e_1$ and $e_2$ be two edges on $C$ in~$\AA$. Corresponding to each of
$e_1$ and $e_2$, we have a  $2\times 3$ grid in $\AA'$; see Figure~\ref{fig:makeX} left. 
This grid can be replaced by a triangular structure with pairwise
crossings of the three pseudocircles, see Figure~\ref{fig:makeX} middle and
right. 
The figure also shows that a 3-coloring of the grid, where the
colors in the columns are $0,1,2$, or $1,2,0$, or $2,0,1$, can be extended to
the three added crossings.  Hence, we obtain a 3-colorable intersecting
arrangement $\AA''$.

   \calc_figscale{30}
    \begin{figure}[htb]
    \centerline{\input{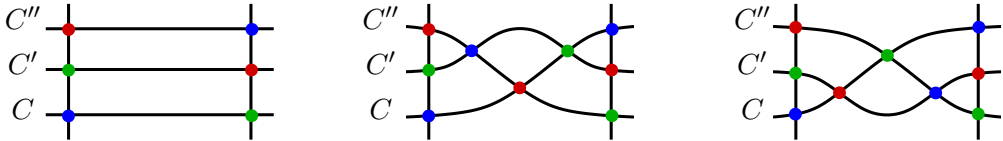}}
    \caption{A $2 \times 3$ grid (left) and two ways of adding pairwise
  crossings on the horizontal curves.\label{fig:makeX}}
    \end{figure}

Let $\AA$ be a 3-colorable arrangement of great-pseudocircles.  If we pick
$e_1$ and $e_2$ as a pair of antipodal edges on $C$ and add the intersections
between $C$, $C'$, and $C''$ along those two edges, once as in the middle of Figure~\ref{fig:makeX} and once as in the right of the figure, then we obtain an arrangement $\AA''$ which is again an arrangement of great-pseudocircles.

Moreover, if $\AA$ is an arrangement of (proper) circles, then clearly $\AA'$ is again an arrangement of circles. Less obvious but still true is that $\AA''$ can also
be realized as a circle arrangement. The reason is that the three circles $C',C''$ can be placed inside an arbitrarily narrow belt centered at $C$.
Figure~\ref{fig:Adoubleprime} shows an
example of a transformation $\AA \to \AA' \to \AA''$.

   \calc_figscale{13}
    \begin{figure}[htb]
    \centerline{\input{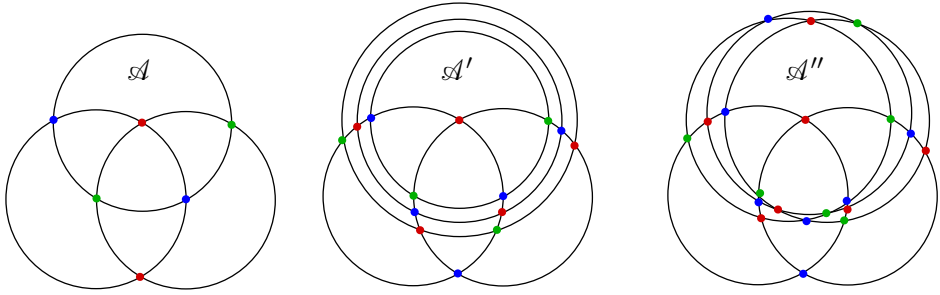}}
    \caption{A 3-colorable arrangement $\AA$ of circles and
  the derived arrangements $\AA'$ and $\AA''$.\label{fig:Adoubleprime}}
    \end{figure}

 The following is a direct consequence of the above-described constructions. 
 \begin{proposition}
  Let $\AA$ be a 3-colorable arrangement of $n$ (intersecting) pseudocircles, great-pseudocircles, or circles, respectively.
  Then for any $k\in \mathbb{N}$, arrangement $\AA$ can be extended to a 3-colorable arrangement of size $n+2k$ of the same type.
 \end{proposition}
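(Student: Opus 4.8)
The plan is to prove the statement by induction on $k$, where a single inductive step consists of one application of the construction $\AA\mapsto\AA'$ (in the non-intersecting case), respectively $\AA\mapsto\AA''$ (in the intersecting case), described above. For $k=0$ there is nothing to show. So assume $k\ge 1$ and that the claim holds for $k-1$; it then suffices to show that from a given $3$-colorable arrangement $\AA$ of $n$ pseudocircles (great-pseudocircles, circles) one can produce a $3$-colorable arrangement of $n+2$ pseudocircles (great-pseudocircles, circles) of the \emph{same} type, and iterate, since the output again comes equipped with an explicit $3$-coloring.

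For the inductive step, fix an arbitrary pseudocircle $C$ of $\AA$ and a $3$-coloring $\phi$. First I would form $\AA'$ by adding the two parallels $C'$ and $C''$ along $C$, together with the coloring $\phi'$ defined in the text, and then check two things: (i) $\AA'$ is again an arrangement of the same type, and (ii) $\phi'$ is proper. For (i): a parallel copy of $C$ crosses every other curve exactly where $C$ does (hence at most twice) and is disjoint from the other two curves among $\{C,C',C''\}$, so $\AA'$ is an arrangement of pseudocircles; if $\AA$ consists of circles, the parallels can be taken as concentric circles in a narrow belt around $C$. In the intersecting case one continues with $\AA'\to\AA''$: pick two edges $e_1,e_2$ on $C$, replace the two associated $2\times 3$ grids by triangular bundles as in Figure~\ref{fig:makeX}, and extend $\phi'$ across the three new crossings. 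This is possible because the two columns of each such grid carry colors $(\phi(v),\phi(v)+1,\phi(v)+2)$ and $(\phi(w),\phi(w)+1,\phi(w)+2)$ with $\phi(v)\neq\phi(w)$, i.e.\ exactly the rotated patterns $0,1,2$ / $1,2,0$ / $2,0,1$ covered by the figure. For great-pseudocircles one additionally takes $e_1,e_2$ antipodal on $C$ and performs the two variants of the replacement, restoring periodicity along every curve; for circles the bundle can again be placed in an arbitrarily narrow belt around $C$.

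The real work, and the only place requiring care, is (ii): checking that $\phi'$ is proper. This is a finite case analysis over the edge types of $\AA'$. Edges with both endpoints in $V_I$ keep their $\phi$-colors; edges running along $C$, $C'$, or $C''$ have both endpoints shifted by the same amount ($0$, $1$, or $2$) and thus stay properly colored; the rungs $vv'$ and $v'v''$ receive colors differing by $1$; edges with both endpoints in $V_O$ are shifted by $2$; and an edge $vw$ of $\AA$ with $v\in C$, $w\in V_O$ becomes $v''w$ with colors $\phi(v)+2$ and $\phi(w)+2$, which differ since $\phi(v)\neq\phi(w)$. The one slightly subtle point is what happens along another curve $D$ at a former crossing $D\cap C$: in $\AA'$ this splits into three consecutive vertices $v,v',v''$ on $D$ with colors $\phi(v),\phi(v)+1,\phi(v)+2$, so the two new edges among them are fine, while the two outer edges of this little path connect to the (appropriately relabelled) $D$-neighbours that $v$ had in $\AA$, which fall under the cases above. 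After this the grid$\to$bundle step only recolors the interior of the modified gadget, and Figure~\ref{fig:makeX} exhibits a valid extension there. Having established that one step preserves $3$-colorability and the type, $k$ applications yield a $3$-colorable arrangement of size $n+2k$ of the same type, completing the induction. I expect no genuine obstacle beyond the bookkeeping of this case analysis, since the constructions and colorings have already been spelled out above.
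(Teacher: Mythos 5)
Your proposal is correct and follows exactly the route the paper intends: the paper states the proposition as a ``direct consequence of the above-described constructions'' $\AA\to\AA'\to\AA''$, and your induction on $k$ together with the case analysis verifying that $\phi'$ is proper and that the grid-to-bundle gadget admits the required extension is precisely the bookkeeping the paper leaves implicit.
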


\section{Constructing 4-chromatic arrangement graphs}
\label{sec:corona}

In the first part of this section, we describe an operation that extends any
$\triangle$-saturated intersecting arrangement of pseudocircles with a
pentagonal cell (which is $3$-colorable by Theorem~\ref{thm:3_colorable_arr})
to a 4-chromatic arrangement of pseudocircles by inserting only one additional
pseudocircle. This corona extension is somewhat related to Koester's crowning,
an operation used to construct an infinite family of 4-regular 4-edge-critical 
planar graphs \cite{Koester1990}. This motivates the study of criticality of the graphs obtained via
the corona extension, which is the topic of Subsection~\ref{subsec:criticality}.

\subsection{The corona extension}
\label{subsec:corona}

We start with a $\triangle$-saturated arrangement $\AA$ of pseudocircles which contains a pentagonal cell $\pentagon$.  
By definition, in the 2-coloring of the faces of $\AA$, one of the two color classes consists of triangles only; 
see e.g.\ the arrangement from Figure~\ref{fig:Koester1}.  
Since the arrangement is $\triangle$-saturated, the pentagonal cell $\pentagon$ is surrounded by triangular cells.  

We can now insert an additional pseudocircle enclosing $\pentagon$ so that the new
pseudocircle intersects only the 5 pseudocircles which bound $\pentagon$ and does so only at edges incident to vertices of $\pentagon$.
Figure~\ref{fig:Koester2} illustrates this extension for the arrangement from Figure~\ref{fig:Koester1}.

In the extended arrangement $\AA^+$, one of the two color classes of faces
consists of triangles and the pentagon~$\pentagon$.  We say that $\AA^+$ is
obtained via a \emph{corona extension}\footnote{
  The writing of this article has benefited from the corona lockdown in April
  2020.}  from $\AA$.  It is interesting to note that the arrangement depicted
in Figure~\ref{fig:Koester2} is Koester's arrangement
\cite{Koester1985}.

\begin{figure}[htb]

	\hbox{}
	\hfill
	\begin{subfigure}[b]{.33\textwidth}
		\centering
		\includegraphics[page=1,width=\textwidth]{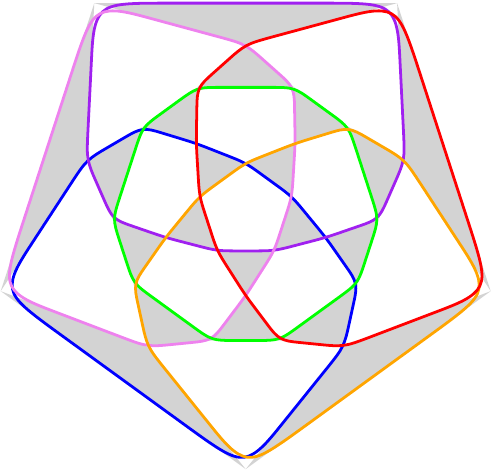}
		\caption{}
		\label{fig:Koester1}
	\end{subfigure}
	\hfill
	\begin{subfigure}[b]{.33\textwidth}
		\centering
		\includegraphics[page=2,width=\textwidth]{figs/koester_corona}
		\caption{}
		\label{fig:Koester2}
	\end{subfigure}
	\hfill
	\hbox{}
	
	\caption{ \subref{fig:Koester1}~A $\triangle$-saturated arrangement of
          6 great-circles and \subref{fig:Koester2}~the corona extension at its
          central pentagonal face.  The arrangement in \subref{fig:Koester2} is
          Koester's~\cite{Koester1985} example of a planar 4-edge-critical
          4-regular planar graph.}
	\label{fig:Koester}
\end{figure}

\goodbreak

To discuss the colorability of the corona extension, we introduce some
notation.  For a graph~$G$, let $\alpha(G)$ denote the size of any maximum
independent set of $G$.  In a proper $k$-coloring of $G$, the vertices of
every color class form an independent set, and we trivially have
$\alpha(G) \geq \frac{|V(G)|}{k}$ for every $k$-colorable graph.

\begin{lemma}\label{theorem:alpha_lt_V3}
  Let $G$ be a 4-regular planar graph. If in the 2-coloring of the faces of
  $G$, one of the classes consists of only triangles and a single pentagon,
  then $\alpha(G)<\frac{|V(G)|}{3}$.
\end{lemma}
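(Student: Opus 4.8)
The plan is to exploit the structure that the $\triangle$-saturated-plus-one-pentagon condition imposes, via the associated cubic plane graph $H$ obtained as in the proof of Theorem~\ref{thm:3_colorable_arr}: $H$ has one vertex per triangular face of $G$, and the edges of $H$ correspond to the vertices of $G$ (two triangles sharing a vertex of $G$ give an edge of $H$). Because one color class of faces of $G$ consists of triangles and a single pentagon $\pentagon$, every vertex of $G$ lies on exactly two faces of that class; the vertices of $G$ on the boundary of $\pentagon$ are those incident to one triangle and the pentagon, all other vertices of $G$ are incident to two triangles. So if we still build $H$ on the triangle-set only, $H$ is cubic everywhere except that the five ``pentagon vertices'' of $G$ become dangling half-edges; equivalently, adding one extra vertex $p$ for the pentagon makes a cubic plane graph $H^{+}$ in which $p$ has degree $5$ — but that is odd, so instead I will add $p$ and note $H^{+}$ is cubic except at the single vertex $p$ of degree $5$. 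The key point: a proper $3$-coloring of $G$ is exactly a proper $3$-edge-coloring of $H^{+}$ (colors on edges around each cubic vertex are all distinct, and consecutive edges around $p$ are distinct because the corresponding vertices of $G$ are adjacent along $\pentagon$).

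First I would make this correspondence precise and then do the counting. A proper $3$-coloring of $G$ partitions $V(G)$ (= $E(H^{+})$) into three color classes. Around each of the $t$ cubic vertices of $H^{+}$ all three colors appear once; around $p$ (degree $5$) the five edges use colors with multiset sizes summing to $5$, so the three colors appear with multiplicities that are a permutation of $(2,2,1)$ — they cannot be $(3,1,1)$ because consecutive edges in the cyclic order around $p$ must differ, and a $5$-cycle of colors with one color used three times forces two equal colors adjacent. Double-counting incidences (each edge of $H^{+}$ has two endpoints): for color class $i$ of size $a_i$, we have $2a_i = t + m_i$ where $m_i\in\{1,2\}$ is the multiplicity of color $i$ at $p$, and $m_1+m_2+m_3 = 5$. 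Summing, $2|V(G)| = 2\sum a_i = 3t + 5$. Now I want to show $a_i < |V(G)|/3$ for every $i$, i.e. $3a_i < |V(G)|$, i.e. $3(t+m_i) < 2|V(G)| = 3t+5$, i.e. $3m_i < 5$, i.e. $m_i \le 1$. That is false when $m_i = 2$, so this bald counting is not quite enough — I need the right reading.

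Here is the fix, and the step I expect to be the main obstacle: one must observe that since $\{m_1,m_2,m_3\}=\{1,2,2\}$ as a multiset, at least one color class, say color $3$, has $m_3 = 1$, giving $3a_3 = 3t+3 < 3t+5 = 3|V(G)|$, hence $a_3 < |V(G)|/3$. But the statement asserts $\alpha(G) < |V(G)|/3$, i.e. \emph{every} independent set is small, not merely some color class in some particular $3$-coloring. So the real argument must be: take any independent set $I$ in $G$; I want $3|I| < |V(G)|$. Equivalently $3|I| \le |V(G)| - 1 = \tfrac{3t+5}{2} - \tfrac12\cdot? $ — cleaner: $|V(G)| = \tfrac{3t+5}{2}$, which forces $t$ odd, so $|V(G)| \equiv 1 \pmod 3$ is not automatic; rather $2|V(G)| = 3t+5$ gives $|V(G)| \equiv (3t+5)/2$, and since $3 \nmid (3t+5)$ we get $3 \nmid 2|V(G)|$, hence $3 \nmid |V(G)|$. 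Therefore $|V(G)|/3$ is not an integer, so it suffices to show $\alpha(G) \le |V(G)|/3$, i.e. $\alpha(G) \le \lfloor |V(G)|/3\rfloor < |V(G)|/3$. That bound $\alpha(G)\le |V(G)|/3$ I would get from the cubic structure: each vertex of $I\subseteq V(G)=E(H^+)$ is an edge of $H^+$; an independent set of $G$ corresponds to a set of edges of $H^+$ no two of which are consecutive around a common vertex, i.e. a \emph{strong matching}-type / more precisely an \emph{induced matching in the line graph} — but the clean statement is: at each cubic vertex of $H^+$ at most one of its three edges is in $I$ (since two edges at a vertex of $H^+$ are consecutive there iff... actually all pairs at a degree-3 vertex are consecutive in cyclic order), so $3|I| \le \sum_{v\in V(H^+)} |\{e\in I: v\in e\}| + (\text{slack at }p) = 2|I| + (\text{correction})$. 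Doing this incidence bound carefully at $p$ (degree $5$: at most two of its edges can be pairwise non-consecutive in a $5$-cycle) yields $\sum_v |\{e\in I:v\ni e\}| \le$ something, and combined with $2|I| = \sum_v |\{e\in I: v\in e\}|$ and the per-vertex cap, one extracts $3|I| \le |V(G)|$; since $|V(G)|/3\notin\mathbb Z$, the inequality is strict. I expect the bookkeeping at the degree-$5$ vertex $p$ — ensuring the pentagon contributes the ``$+5$'' that makes $3 \nmid |V(G)|$ and simultaneously does not allow an independent set to overshoot — to be the delicate part; everything else is the now-standard translation between $3$-colorings of $G$ and $3$-edge-colorings of its cubic premedial graph.
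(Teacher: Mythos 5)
Your second, incidence-counting argument is essentially the paper's proof in premedial-graph clothing: counting the edges of $I$ against the vertices of $H^{+}$ (at most one per cubic vertex, at most two at the degree-$5$ vertex $p$) is exactly the paper's count of pairs $(v,F)$ with $v$ in the independent set and $F$ an incident black face, yielding $2|I|\le t+2$, and your observation that $2|V(G)|=3t+5$ forces $3\nmid|V(G)|$ plays the same role as the paper's parity-of-$t$ argument in making the final inequality strict. The first half of your write-up (the attempted $3$-edge-coloring route) is a dead end you correctly abandon, and the "delicate bookkeeping at $p$" you defer is just the arithmetic $6|I|\le 3t+6=2|V(G)|+1$, hence $3|I|\le|V(G)|$, so nothing essential is missing.
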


\begin{proof}
  Color the faces of $G=(V,E)$ with black and white. Let the black class
  contain only triangles and one pentagon. Let $t$ be the number of
  these triangles and let $\alpha:=\alpha(G)$.  Given an independent set $I$
  of cardinality $\alpha$, we count the number of pairs $(v,F)$, where $v$ is
  a vertex of $I$ and $F$ is a black face of $\mathcal{A}$ incident to
  $v$. There are 2 such faces for every $v\in I$, hence, $2\alpha$ pairs in
  total.  Since any independent set of $G$ contains at most one vertex of each
  triangle and at most two vertices of the pentagon, the number of pairs
  $(v,F)$ is at most $t + 2$. Hence, we have
  \begin{equation}
    \label{eqn:2alpha_le_triangle_plus_2}
    2\alpha\le t + 2.
  \end{equation}
  Since $G$ is 4-regular, there are exactly $|E|=2|V|$ edges.  As every edge
  is incident to exactly one black face, we also have $|E|=3 t + 5$.
  This yields the equation
  \begin{equation}
    \label{eqn:3triangle_plus_5_is_2V}
    3 t + 5 = 2|V|.
  \end{equation}
  From equation~(\ref{eqn:3triangle_plus_5_is_2V}), we conclude that
  $t$ is odd. Therefore we can strengthen
  equation~(\ref{eqn:2alpha_le_triangle_plus_2}) to
  \begin{equation}
    \label{eqn:2alpha_le_triangle_plus_1}
    2\alpha\le t + 1.
  \end{equation}
  Combining equations~(\ref{eqn:3triangle_plus_5_is_2V})
  and~(\ref{eqn:2alpha_le_triangle_plus_1}) yields
  $6\alpha\le 3 t + 3 = 2|V| -2 $ and hence $\alpha < \frac{|V|}{3}$.
\end{proof}

\begin{proposition}\label{prop:corona_4col}
  The corona extension of a $\triangle$-saturated arrangement of pseudocircles
  with a pentagonal cell~$\pentagon$ is 4-chromatic.
\end{proposition}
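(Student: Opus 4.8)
The plan is to combine the two facts we already have in place: the corona extension $\AA^+$ is $\triangle$-saturated except for the single pentagonal face $\pentagon$, and Lemma~\ref{theorem:alpha_lt_V3} tells us that such an arrangement graph $G$ has independence number $\alpha(G)<\frac{|V(G)|}{3}$. Since any proper $3$-coloring of $G$ would partition $V(G)$ into three independent sets, at least one of which has size $\geq\frac{|V(G)|}{3}$, the strict inequality $\alpha(G)<\frac{|V(G)|}{3}$ immediately rules out $3$-colorability. As $G$ is planar with maximum degree $4$, the Four Color Theorem (or Brooks' theorem) gives $\chi(G)\leq 4$, so $\chi(G)=4$ and $\AA^+$ is $4$-chromatic.

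Concretely, the steps are as follows. First I would recall that by construction the corona extension inserts one pseudocircle meeting only the five pseudocircles bounding $\pentagon$, and only along edges incident to vertices of $\pentagon$; hence $\AA^+$ is again a simple arrangement of pseudocircles, its arrangement graph $G$ is $4$-regular and planar, and in the induced $2$-coloring of faces the black class consists of triangles and the one pentagon $\pentagon$ (this is exactly the situation described just before the lemma). Second, I would invoke Lemma~\ref{theorem:alpha_lt_V3} to conclude $\alpha(G)<\frac{|V(G)|}{3}$. Third, I would observe that in any proper $k$-coloring the color classes are independent sets covering $V(G)$, so $\alpha(G)\geq\frac{|V(G)|}{k}$ for every $k$-colorable graph; taking $k=3$ contradicts the lemma, so $G$ is not $3$-colorable. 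Fourth, $\chi(G)\leq 4$ by the Four Color Theorem applied to the planar graph $G$, giving $\chi(G)=4$.

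There is essentially no obstacle here: the proof is a two-line corollary of Lemma~\ref{theorem:alpha_lt_V3}. The only point that warrants a sentence of care is verifying that the corona extension genuinely produces a graph satisfying the hypothesis of the lemma, i.e.\ that the only non-triangular black face created is $\pentagon$ itself and that $G$ remains $4$-regular --- both of which follow directly from the fact that the new pseudocircle crosses each of the five bounding pseudocircles exactly twice, at edges incident to vertices of $\pentagon$, subdividing the triangles around $\pentagon$ into new triangles and quadrilaterals in such a way that the black (triangle) side of the face $2$-coloring picks up only triangles and the pentagon. Once that is noted, the chain ``$\triangle$-saturated-plus-one-pentagon $\Rightarrow\alpha<|V|/3\Rightarrow$ not $3$-colorable $\Rightarrow$ $\chi=4$'' closes the argument.
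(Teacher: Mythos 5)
Your proof is correct and follows exactly the paper's route: apply Lemma~\ref{theorem:alpha_lt_V3} to get $\alpha(G)<\frac{|V(G)|}{3}$, conclude non-$3$-colorability, and close with the Four Color Theorem for the upper bound. The extra care you take in checking that the corona extension satisfies the lemma's hypothesis is the content of the paper's preceding discussion in Section~\ref{subsec:corona} and is a welcome, if implicit, part of the argument.
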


\begin{proof}
  From Lemma~\ref{theorem:alpha_lt_V3} we know that after the corona extension
  the inequality $3\alpha(G) < |V(G)|$ holds.  This implies that the corona
  extension of a $\triangle$-saturated arrangement of pseudocircles with a
  pentagonal cell~$\pentagon$ is not 3-colorable.
\end{proof}

It is remarkable that the argument from the proof of
Lemma~\ref{theorem:alpha_lt_V3} only holds for pentagons. More precisely, if
the class of black faces of $G$ consists of triangles and a single $k$-gon,
then we need $k=5$ to get $\alpha < |V|/3$.

By iteratively applying the doubling method (cf.\
Section~\ref{sec:trianglesat}) to the arrangements depicted in
Figure~\ref{fig:trianglesat}, we obtain $\triangle$-saturated arrangements of
$n$ pseudocircles which have pentagonal cells for infinitely many values of
$n \equiv 0,4 \pmod 6$.  Applying the corona
extension to the members of this infinite family yields an infinite family
of arrangements that are not 3-colorable.

\begin{theorem}
\label{thm:pentagon}
There exists an infinite family of simple 4-chromatic arrangements of
pseudocircles, each of which is obtained from an intersecting arrangement of
pseudocircles by adding only one additional pseudocircle.
\end{theorem}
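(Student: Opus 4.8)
The plan is to combine the corona extension (Proposition~\ref{prop:corona_4col}) with an explicit infinite supply of $\triangle$-saturated intersecting arrangements that possess a pentagonal cell.

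\medskip

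\noindent\textbf{Step 1: Produce a seed arrangement.}
First I would exhibit a concrete $\triangle$-saturated intersecting arrangement of pseudocircles that contains a pentagonal face. The arrangement of $6$ great-circles in Figure~\ref{fig:Koester1} already does the job: it is $\triangle$-saturated and its central face is a pentagon. (Any of the arrangements in Figure~\ref{fig:trianglesat} with a pentagonal cell would work equally well as a seed.)

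\medskip

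\noindent\textbf{Step 2: Inflate the seed into an infinite family via the doubling method.}
Next I would invoke the doubling construction discussed in Section~\ref{sec:trianglesat}: replacing one pseudocircle $C$ of a $\triangle$-saturated intersecting arrangement by a cyclic bundle of pseudocircles drawn parallel to $C$ yields again a $\triangle$-saturated intersecting arrangement, and increases the number of pseudocircles in a controlled way (through the residues $n\equiv 0,4\pmod 6$). The key point I need to check here is that the pentagonal cell $\pentagon$ survives: since doubling acts locally along the chosen pseudocircle $C$, one simply picks $C$ to be a pseudocircle \emph{not} on the boundary of $\pentagon$ and disjoint in its neighborhood from $\pentagon$, so that $\pentagon$ is untouched. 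Iterating this gives an infinite family $\AA_1, \AA_2, \dots$ of $\triangle$-saturated intersecting arrangements of pseudocircles, each with a pentagonal cell, and with strictly increasing sizes.

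\medskip

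\noindent\textbf{Step 3: Apply the corona extension.}
Finally, to each $\AA_i$ I apply the corona extension at its pentagonal cell. By construction this inserts exactly one new pseudocircle, and the result $\AA_i^+$ is an arrangement of pseudocircles obtained from the intersecting arrangement $\AA_i$ by adding a single pseudocircle. By Proposition~\ref{prop:corona_4col}, each $\AA_i^+$ is $4$-chromatic. Since the $\AA_i$ have unbounded size, so do the $\AA_i^+$, and the family $\{\AA_i^+\}_{i\ge 1}$ is infinite. This establishes the theorem.

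\medskip

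\noindent\textbf{Main obstacle.}
The only genuinely delicate point is Step~2: verifying that the doubling operation can always be performed so as to preserve both the $\triangle$-saturation \emph{and} an intact pentagonal cell simultaneously, i.e., choosing the doubled pseudocircle far enough from $\pentagon$ in every iteration while still staying within the residue classes for which $\triangle$-saturated arrangements are known to exist. Everything else is immediate from results already proved in the paper: the corona extension adds exactly one pseudocircle and yields a $4$-chromatic arrangement by Proposition~\ref{prop:corona_4col}.
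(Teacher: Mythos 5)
Your proposal is correct and follows essentially the same route as the paper: iterate the doubling method on a $\triangle$-saturated intersecting seed arrangement with a pentagonal cell to get an infinite family, then apply the corona extension and invoke Proposition~\ref{prop:corona_4col}. The only difference is that you explicitly flag the need to keep the pentagon intact under doubling, a point the paper asserts without comment; your suggested remedy (double a pseudocircle away from $\pentagon$) is exactly the right fix.
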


\subsection{Criticality}
\label{subsec:criticality}

Koester~\cite{Koester1990} introduced the \emph{crowning} operation and used
this operation to construct an infinite family of 
$4$-regular $4$-edge-critical planar graphs (cf. Proposition \ref{prop:koester_edge_critical} and Figure \ref{fig:krown}). A particular example of a graph obtained by crowning
is the Koester graph of Figure~\ref{fig:Koester2}, which happens to be an
arrangement graph of circles. 

Since crowning and the corona extension show some similarities and both
operations can be used to obtain the Koester graph depicted in Figure~\ref{fig:Koester2},
we believe that many of the 4-chromatic arrangements obtained with the 
corona extension (Theorem~\ref{thm:pentagon}) are in fact  
4-vertex-critical. In the following, we present
sufficient conditions to obtain 4-vertex-critical and 4-edge-critical arrangements via the corona
extension.

We need some terminology.  
Let $H$ be a cubic plane graph and let $G=M(H)$ be its medial graph.  If $H$ is
bridgeless, then $\chi(G)=3$ by Theorem~\ref{thm:3_colorable_gr}.  If in addition $H$ has
a pentagonal face $\pentagon_H$, then we can apply the corona extension to $G$ to obtain
a 4-regular graph $G^\circ$ with
$\chi(G^\circ)=4$ (Lemma~\ref{theorem:alpha_lt_V3}).  We are interested in conditions on $H$ which imply that~$G^\circ$ is 4-vertex-critical or even 4-edge-critical. 

With $\pentagon_G$ we denote the pentagon corresponding to $\pentagon_H$ in $G$. The \emph{connector
  vertices} of~$\pentagon_G$ are the five
vertices of the triangles adjacent to $\pentagon_G$ which do not belong to $\pentagon_G$.

   \calc_figscale{18}
    \begin{figure}[htb]
    \centerline{\input{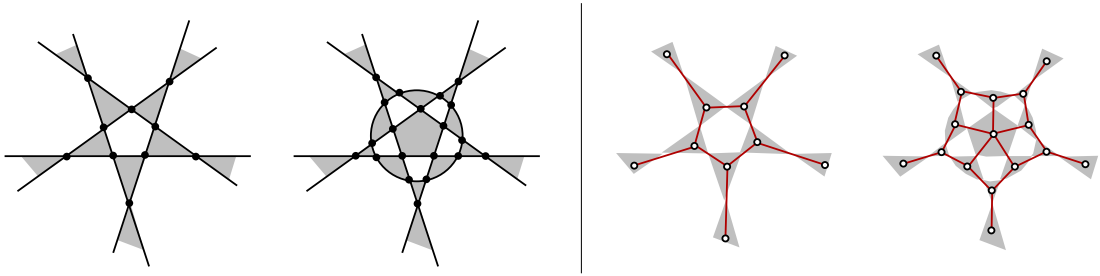}}
    \caption{Applying the corona extension at a pentagon of a $\triangle$-saturated 4-regular planar graph.  \label{fig:corona}}
    \end{figure}

Consider a 3-edge-coloring $\varphi$ of $H$. 
We call $\varphi$ \emph{trihamiltonian}, if all three subgraphs induced by edges of two of the three colors of~$\varphi$ induce a Hamiltonian cycle on $H$. We will prove the following:

\begin{theorem}\label{thm:ve-critical}
  Let $H$ be a cubic planar graph with a pentagonal face $\pentagon_H$ 
  and a trihamiltonian 3-edge-coloring~$\varphi$.  
  If $G$ is the medial graph of $H$ and $\GC$
  is obtained from $G$ by the corona extension at $\pentagon_G$, 
  where $\pentagon_G$ is the pentagonal face of $G$ corresponding to $\pentagon_H$, then $G^\circ$ is 4-vertex-critical. 
  If, additionally, $H$ admits 5-fold rotational symmetry around $\pentagon_H$, 
  then $G^\circ$ is even 4-edge-critical.
\end{theorem}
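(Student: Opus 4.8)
The plan is to exploit the correspondence between 3-edge-colorings of a cubic bridgeless graph $H$ and 3-vertex-colorings of its medial graph $G=M(H)$, as established in the proof of Theorem~\ref{thm:3_colorable_gr}, and to track what the corona extension does to this correspondence. Since $\GC$ is 4-chromatic by Proposition~\ref{prop:corona_4col}, to prove 4-vertex-criticality I must show that for every vertex $v$ of $\GC$, the graph $\GC - v$ is 3-colorable. There are two kinds of vertices: the $5$ new vertices on the added corona pseudocircle, and the old vertices coming from $G$ (split, near the pentagon $\pentagon_G$, into the connector vertices and the rest). For a new vertex $v$ on the corona, deleting it opens up the cyclic structure near $\pentagon_G$, and one should be able to recolor locally: take the $3$-coloring of $G$ coming from a proper $3$-edge-coloring of $H$ and propagate colors along the $2\times 2$-type gadget attached to each pentagon edge, using the freedom created by the missing vertex. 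The trihamiltonian hypothesis is what guarantees enough flexibility: if two of the three color classes of $\varphi$ form a Hamiltonian cycle on $H$, then on the medial graph the corresponding two vertex-color classes are arranged along a single alternating cycle, and one can "rotate" the coloring along this Hamiltonian structure to free up a prescribed vertex.

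The key steps, in order, are: (1) recall explicitly the bijection edges of $H$ $\leftrightarrow$ vertices of $G$, and how a $3$-edge-coloring of $H$ with colors $\{a,b,c\}$ becomes a $3$-vertex-coloring of $G$ where each facial cycle of $G$ is rainbow-free in a controlled way; (2) describe the local picture at $\pentagon_G$ after the corona extension as in Figure~\ref{fig:corona} --- the pentagon, its five connector vertices, and the five $2\times 3$-grid-like gadgets (one per pentagon edge) carrying the new pseudocircle; (3) show that for each of the two Hamiltonian cycles $C_1,C_2$ promised by trihamiltonicity, one gets a family of "shifted" $3$-colorings of $G$ obtained by adding a constant in $\ZZ_3$ to the colors of all vertices lying on one side of $C_i$, which restricts to a coloring of $G\setminus\{v\}$ for any chosen $v$ on $C_i$; (4) check, by the explicit extension rules already used in Section~\ref{sec:another_construction} (the columns-coloring-extends-to-the-crossings argument around Figure~\ref{fig:makeX}), that each such shifted coloring of $G - v$ extends over the corona gadgets, using that one grid column is now "free" because $v$ is missing; (5) conclude that every vertex of $\GC$ is covered by at least one of these cases, hence $\GC$ is 4-vertex-critical.

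For the edge-critical strengthening, the plan is to add the $5$-fold rotational symmetry around $\pentagon_H$. Under this symmetry the $5$ pentagon edges, the $5$ connector vertices, and the $5$ corona gadgets each form a single orbit, so it suffices to show $\GC - e$ is $3$-colorable for one representative edge $e$ of each orbit of edges (there are only a bounded number of such orbit-types: corona edges, pentagon edges of $G$, the edges joining $\pentagon_G$ to connectors, the triangle edges at the connectors, and "generic" edges of $G$ far from $\pentagon_G$). For a generic edge $e$ far away, deleting $e$ certainly permits $3$-coloring since even deleting one of its endpoints does (by vertex-criticality). For each of the finitely many orbit-representatives near $\pentagon_G$, I would exhibit an explicit $3$-coloring of $\GC - e$ directly from a trihamiltonian coloring of $H$, again using the shift trick along $C_1$ or $C_2$; the symmetry assumption is what lets a single check per orbit suffice and, crucially, lets us line up the Hamiltonian cycle so that the relevant gadget edge becomes non-constraining.

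The main obstacle I expect is step (4): verifying that the shifted $3$-colorings genuinely extend across the corona gadgets after a vertex (or edge) deletion. The corona construction forces a rigid $0,1,2$ / $1,2,0$ / $2,0,1$ pattern on the grid columns (mirroring the makeX analysis), so one must confirm that deleting the chosen vertex really does relax exactly the constraint that would otherwise obstruct the shift — i.e.\ that the Hamiltonian cycle of $\varphi$ passes through the gadget in the "right" way relative to the missing vertex. Making this precise will require a careful, but finite, case analysis of how $C_1$ and $C_2$ traverse the neighborhood of $\pentagon_G$, and it is here that trihamiltonicity (rather than mere $3$-edge-colorability) does the real work, since a single Hamiltonian cycle would not in general meet every gadget in a usable way. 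I would organize this as a short lemma: "for each vertex $v$ of $\GC$ near $\pentagon_G$, at least one of $C_1,C_2$ yields a shift freeing $v$," and then let the routine extension-over-gadgets computation finish the argument.
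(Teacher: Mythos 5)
Your high-level framework (the medial-graph correspondence, the corona gadget at $\pentagon_G$, and the reduction to rotational orbits for edge-criticality) matches the paper's, but the two recoloring mechanisms you rely on do not work as stated. First, the ``shift'' in your step (3) is not a valid move: if $C_i$ is the Hamiltonian cycle of $H$ formed by two colour classes of $\varphi$, then in $G=M(H)$ \emph{every} vertex of those two colours lies \emph{on} the corresponding cycle, and each vertex of the third colour has all four of its neighbours on that cycle; adding a constant of $\ZZ_3$ to the colours on one side therefore creates conflicts along the cycle rather than a proper colouring, and in any case a global shift that does not depend on $v$ cannot explain why exactly the constraint at $v$ is relaxed. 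What the paper does instead is a local, stepwise Kempe-type move: starting from a 4-colouring of $\GC$ with a single ``special'' (fourth-colour) vertex, it moves that special vertex along the cycles of $G$ induced by the two-coloured Hamiltonian cycles of $H$ (recolouring one vertex per step), and trihamiltonicity guarantees these cycles reach every vertex; together with the reflected colouring this covers all vertices.

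Second, and more seriously, your edge-criticality argument is backwards: for $e=uv$ one has $G^\circ-v\subseteq G^\circ-e$, so $3$-colourability of $G^\circ-v$ does \emph{not} imply $3$-colourability of $G^\circ-e$ --- in $G^\circ-e$ the vertex $v$ is still present with three neighbours that may use all three colours. This is precisely the hard part of the theorem. The paper's resolution is that each step of the special-vertex walk certifies criticality of the edge it traverses (the special vertex $u$ sees the colour of $w$ only once among its neighbours, so after deleting $uw$ one may recolour $u$ with $w$'s colour and obtain a proper $3$-colouring); the $5$-fold symmetry then reduces the problem to finitely many edge orbits, all but one of which are traversed by these walks, and the last orbit (the short corona edges inside the triangles adjacent to $\pentagon_G$) is handled by a separate explicit recolouring. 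You would need to replace both your step (3) and your ``generic edge'' reduction by arguments of this kind before the proposal can be completed.
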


\begin{figure}[!htb]
  \begin{subfigure}[b]{.2\textwidth}
    \centering
    \includegraphics[page=1, width=\textwidth]{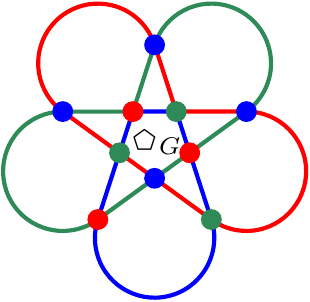}
    \label{fig:critical_edges1}
    \vspace{68pt}
  \end{subfigure}
  \hfill
  \begin{subfigure}[b]{.5\textwidth}
    \centering \includegraphics[page=6,width=\textwidth]{criticalityHamCycl.pdf}
    \label{fig:critical_edges2}
  \end{subfigure}
  \hfill
  \begin{subfigure}[b]{.25\textwidth}
    \centering \includegraphics[page=10,width=\textwidth]{criticalityHamCycl.pdf}
    \includegraphics[page=11,width=\textwidth]{criticalityHamCycl.pdf}
    \label{fig:critical_edges3}
  \end{subfigure}
  
  \caption{Left: Edges of $G$ have the color that is missing on its incident vertices.\\
  Middle: The golden vertex can move along the red path and arrows in both directions. \\
  Right: The purple edge indicates the critical edge after we change colors on the green cycle.}
  \label{fig:critical}
\end{figure}

\begin{proof}
  Recall from Section \ref{sec:trianglesat} that the 3-edge-coloring $\varphi$ of $H$ yields a 3-vertex-coloring of the $\triangle$-saturated graph $G$. 
  Each of the three 2-colored Hamiltonian cycles in $H$ given by $\varphi$ yields a cycle in $G$, which covers all the vertices of the respective colors. This is indicated in Figure \ref{fig:critical} (left). 
  Each edge of $G$ is contained in exactly one of the 3 cycles, hence we obtain a non-proper 3-edge-coloring of $G$ with the property that every color class is a cycle. The two red, two green and one blue circular arcs indicate the way that these three cycles are closed outside the corona region. Note that the order of connector vertices on the red and green cycle must be as indicated in Figure \ref{fig:critical} (left) since each monochromatic cycle is non-crossing. 
  Every vertex belongs to two of the three cycles induced by the edge coloring, hence, in Figure \ref{fig:critical} arcs of different colors can have multiple intersections and touchings. 
  
  Note that Figure \ref{fig:critical} (left) has a vertical axis of symmetry which preserves the blue vertices and the blue cycle but exchanges the colors red and green. In the following, we will show how to modify these two colorings (original and reflected) in order to find a collection of 4-colorings of $\GC$ that allows to argue for 4-vertex- and 4-edge-criticality in the respective cases.

  Figure \ref{fig:critical} (middle) shows a 4-coloring $\varphi^\circ$ of $\GC$ with the same coloring of the connector vertices and vertices outside the corona region as in Figure \ref{fig:critical} (left). Note that a single vertex is colored with the fourth color (gold). If in a 4-vertex-coloring $\varphi'$ a vertex is the single golden vertex, we call it the \emph{special vertex} of the coloring. To show that $\GC$ is 4-vertex-critical, we need to show that every vertex in the graph is the special vertex of some 4-coloring of $\GC$. 
  
  In every 4-coloring with a special vertex $v$, this vertex is surrounded by all three colors, since we know from Lemma \ref{theorem:alpha_lt_V3} that the graph $\GC$ is 4-chromatic. Thus only one of the colors appears twice. Recoloring $v$ with the one of the other colors and the corresponding neighbor $w$ of $v$ with gold makes this neighbor the special vertex of the new coloring. We say that the special vertex \emph{moves} from $v$ to $w$. To show that an edge $e$ is critical in $\GC$, it suffices to show that there is a 4-coloring with special vertex $v$ which allows such a move from $v$ to $w$.
  
  Starting from $\varphi^\circ$, we can make the special vertex move along the red path (see Figure \ref{fig:critical} (middle)), changing the colors of green and blue vertices along the way. To see this, remember that the green and blue vertices on the red arcs have 2 red neighbors (in $G$ and thus in $\GC$), so the blue and green color are the ones to move along. At one of the endpoints of the red path in Figure \ref{fig:critical} (middle), there are two blue neighbors, thus the next neighbor to move to is the red neighbor. At the other endpoint, there are two green neighbors, so again the red neighbor is the next neighbor to move to. This is indicated by arrows in Figure \ref{fig:critical} (middle).
  
 \begin{figure}[t]
  \centering
  
  \includegraphics{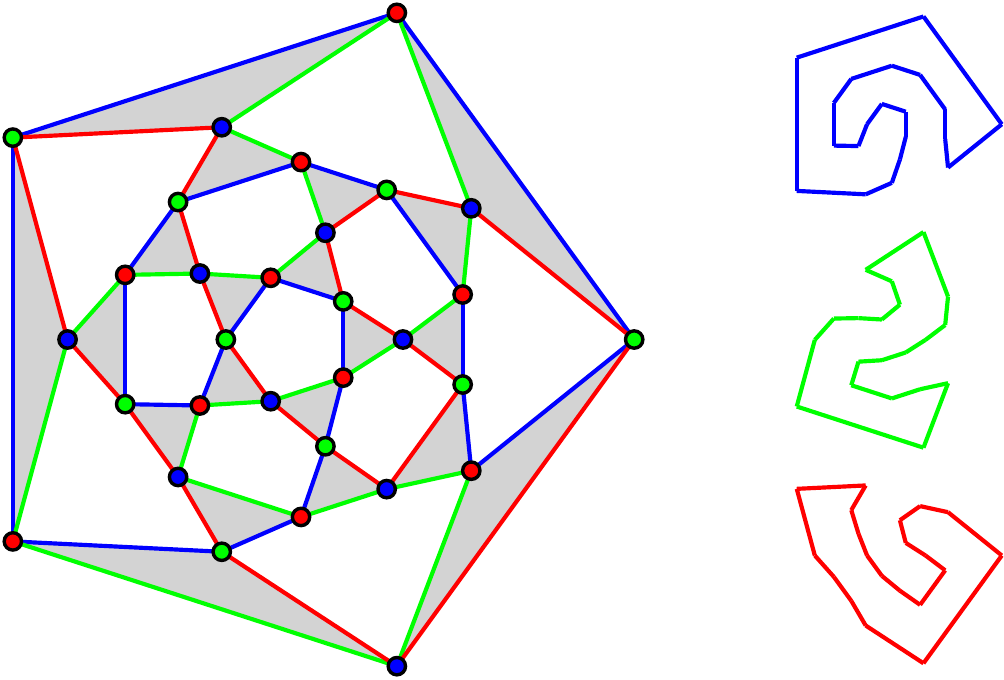}
  
  \caption{A 3-coloring of the great-circle arrangement from
    Figure~\ref{fig:Koester1}. 
    The three cycles obtained by removing each of the color classes are depicted on the right.}
  \label{fig:cycle3-koester}
 \end{figure}
 
\begin{figure}[p]
  \centering
  
  \includegraphics[width=0.7\textwidth]{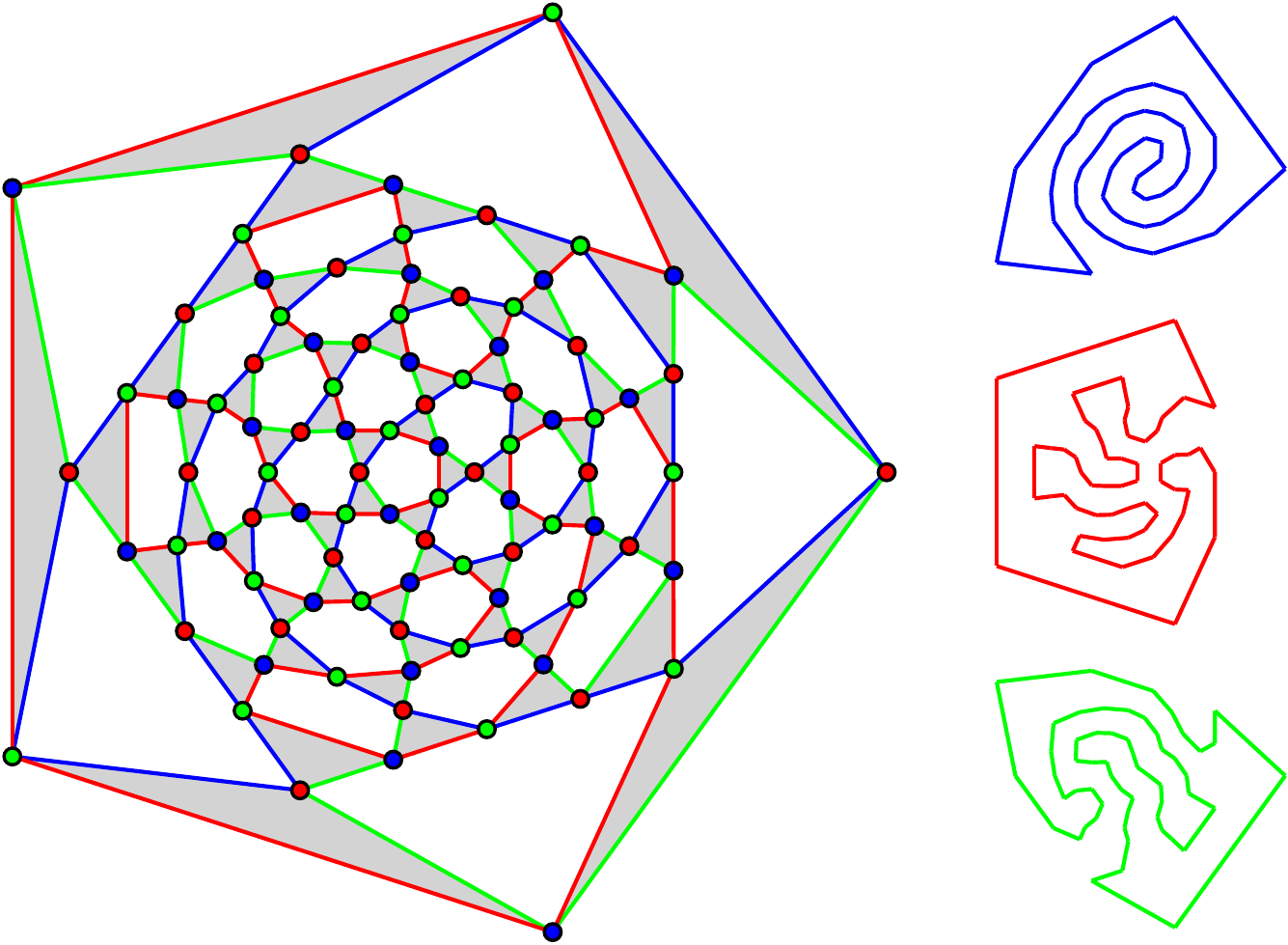}
  
  \caption{A 3-coloring of a $\triangle$-saturated great-circle arrangement.
    The three cycles obtained by removing each of the color classes are depicted on the right.}
  \label{fig:cycle3-n10}
  \centering
  
  \includegraphics{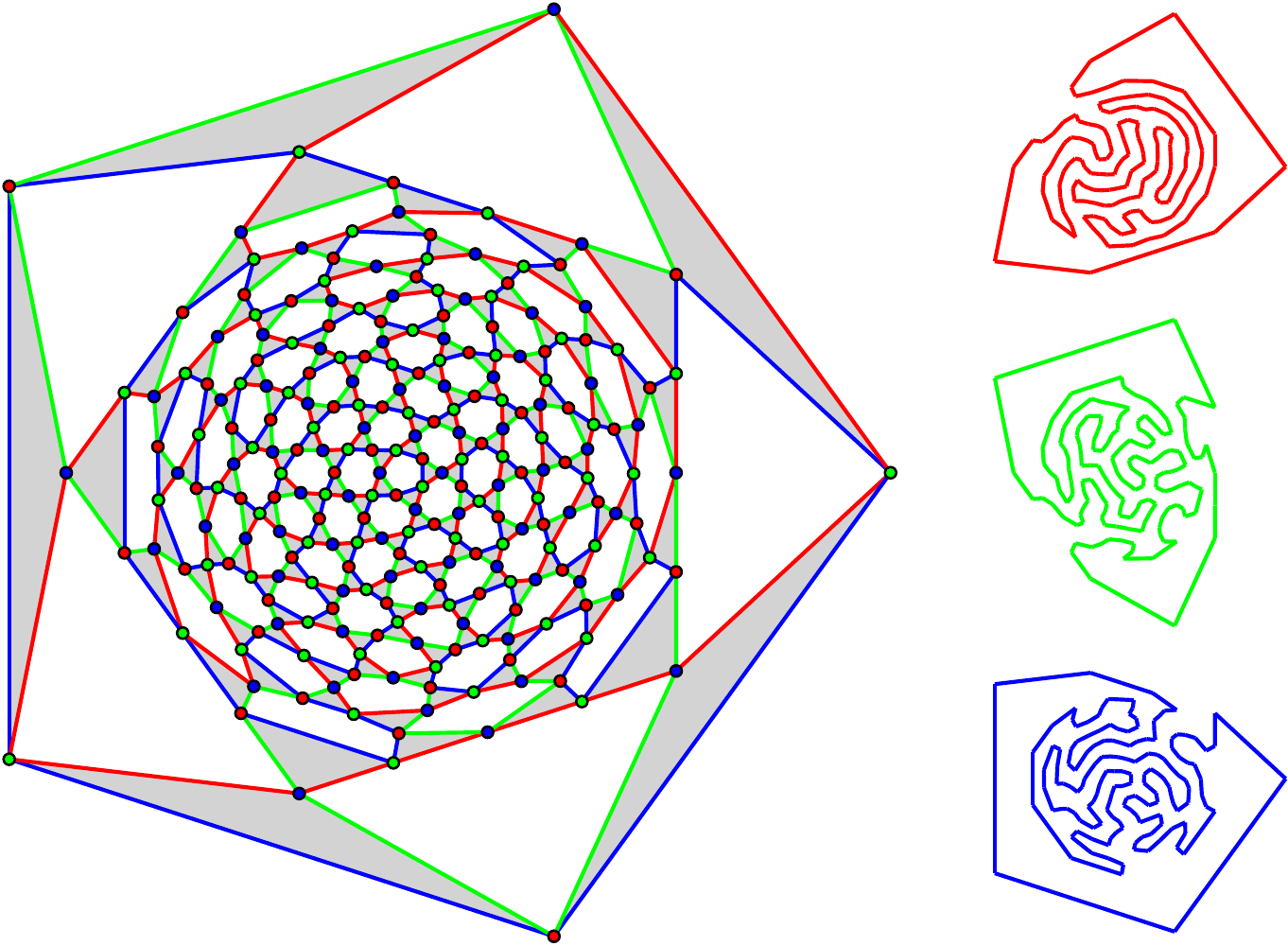}
  
  \caption{This great-circle arrangement of 16 great-pseudocircles has been discovered by Simmons \cite{Simmons73}.
    The three cycles obtained by removing any of the color classes are depicted on the right.}
  \label{fig:cycle3-n16}
\end{figure}
  Move the golden vertex along the two red branches and the extending steps. Then turn to the symmetric (reflected) coloring and do the symmetric moves. We claim that together this yields a collection of colorings of $\GC$ such that every vertex is the special (golden) vertex of one of them.
  For the vertices inside or on the new circle, this is easily checked from Figure \ref{fig:critical} (middle). The vertices outside of the corona region are colored with 3 colors. The blue and green ones lie on the red arcs and are therefore reached when moving the golden vertex along the red arcs. The red ones lie on the green paths and will therefore be reached if we start from the reflected coloring, because then these same vertices would be green and lie on the corresponding red arcs. Thus 4-vertex-criticality is established.
  
  Now suppose that $H$ has a 5-fold symmetry fixing $\pentagon_H$. This symmetry carries over to $G$ and $\GC$. Thus it is sufficient to show that any edge can be rotated to an edge that we have covered already. The only edges which are not covered by the moves along the extended red paths of Figure \ref{fig:critical} (middle) and its rotations are the small edges on the new circle that are inside the triangles of $G$ next to $\pentagon_G$. In Figure \ref{fig:critical} (right), we show an additional extension of the coloring of the connector vertices to the interior. Exchanging the colors of the blue and red vertices of the green cycle in the bottom left makes it possible to 3-color the graph by making the special vertex blue, if the purple edge is omitted. Since the purple edge is a representative of the last rotational orbit we did not cover yet, this yields 4-edge-criticality.
\end{proof}

Next, we present some examples of the application of Theorem \ref{thm:ve-critical}. Let $H$ be a cubic planar
graph which has a unique 3-edge-coloring up to permutations of the colors. Then this coloring is 
trihamiltonian, since if a graph induced by two colors has more than one component, we can change the two colors on this component alone and construct a different coloring. Thus for any pentagon in $H$, the resulting graph $\GC$ is 4-vertex-critical.

The class of uniquely 3-edge-colorable cubic graphs is well understood.
Fowler~\cite[Theorem 2.8.5]{FowlerPHD} characterized them as the graphs that can be obtained from
$K_4$ by successively replacing a vertex by a triangle. These are the duals of stacked triangulations, 
which are the uniquely 4-colorable planar graphs~\cite[Conjecture 1.2.1]{FowlerPHD}\footnote{This theorem in the thesis is called a conjecture, since it is first proved to be equivalent to the Fiorini-Wilson-Fisk Conjecture, which is proved much later as the main result of the thesis.}.

Additionally, Figures~\ref{fig:cycle3-koester}, \ref{fig:cycle3-n10},
and~\ref{fig:cycle3-n16} show $\triangle$-saturated arrangements of 6, 10, and 16 great-pseudocircles respectively, that admit 5-fold rotational symmetry. The arrangement graphs are shown with 3-colorings which correspond to trihamiltonian 3-edge-colorings of their respective premedial graphs. The theorem implies that the corona extension at the outer pentagon of these arrangements yields 4-edge-critical graphs. We are aware of three more \mbox{$\triangle$-saturated} arrangements of 6, 7, and 9 pseudocircles respectively, which have 4-edge-critical corona extensions. For these arrangements, however, the 4-edge-criticality is not implied by our theorem.
 All data is available on the supplementary website~\cite{pseudocircles_website}.

We conclude this section with the following conjecture:

\begin{conjecture}\label{conj:pentagon}
  There exists an infinite family of simple arrangement graphs of $4$-edge-critical arrangements of pseudocircles.
\end{conjecture}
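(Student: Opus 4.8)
The plan is to combine the two main tools of this section. By Theorem~\ref{thm:pentagon}, the corona extension turns any $\triangle$-saturated intersecting arrangement of pseudocircles with a pentagonal cell into a $4$-chromatic arrangement of pseudocircles; and by Theorem~\ref{thm:ve-critical}, $4$-chromaticity is upgraded to $4$-edge-criticality as soon as the cubic premedial graph $H$ of the base arrangement has a pentagonal face $\pentagon_H$, a trihamiltonian $3$-edge-coloring, and a $5$-fold rotational symmetry fixing $\pentagon_H$. So it suffices to produce an \emph{infinite} family of $\triangle$-saturated intersecting arrangements of pseudocircles whose premedial graphs satisfy these three conditions simultaneously: applying the corona extension at the corresponding pentagonal cell of each member then yields the desired infinite family, and each resulting arrangement is simple because the newly inserted pseudocircle crosses only the five pseudocircles bounding the pentagon (twice each) and can be drawn avoiding all triple points.

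To obtain such a family I would start from one of the $\triangle$-saturated great-pseudocircle arrangements with $5$-fold rotational symmetry already displayed in Figures~\ref{fig:cycle3-koester}, \ref{fig:cycle3-n10}, and~\ref{fig:cycle3-n16} --- each having a pentagonal (outer) cell and a premedial graph that carries a trihamiltonian $3$-edge-coloring --- and grow it by a \emph{symmetry-preserving} version of the doubling method of Section~\ref{sec:trianglesat}. Concretely, pick a set $S$ of pseudocircles invariant under the $5$-fold rotation (a single rotation-invariant pseudocircle, if one is present, or a complete orbit of five pseudocircles) and replace each pseudocircle of $S$ by a thin cyclic bundle of parallel pseudocircles, performing the replacement in the same way on every member of the orbit. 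Since the doubling method is designed to preserve $\triangle$-saturation and is now carried out equivariantly, the result is again a $\triangle$-saturated intersecting arrangement of pseudocircles with the same pentagonal cell and the same $5$-fold symmetry, and it has strictly more pseudocircles; iterating then produces the infinite family, \emph{provided} the premedial graph remains trihamiltonian at every step. As an alternative route, one could work on the ``dual-premedial'' side: by Fowler's characterization~\cite{FowlerPHD}, a cubic planar graph is uniquely (hence trihamiltonianly) $3$-edge-colorable precisely when its dual is a stacked triangulation, so one could try to build an infinite family of stacked triangulations with a degree-$5$ vertex admitting $5$-fold rotational symmetry and then verify that the medial graph of the dual is realizable as an arrangement of pseudocircles.

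The main obstacle is the tension between pseudocircle-realizability and the combinatorial conditions on $H$. The doubling route keeps realizability manifest --- every intermediate object is literally an arrangement of pseudocircles --- but one must check that each equivariant doubling step preserves trihamiltonicity of the premedial graph; this is where I expect the real work to lie, because doubling inserts new vertices into $G$ that have to be threaded consistently into all three monochromatic cycles, and one must control how the three $2$-colored Hamilton cycles of $H$ transform under the operation. The stacked-triangulation route makes unique $3$-edge-colorability automatic but then demands a proof that the medial graph of the dual of the constructed triangulation is genuinely an arrangement graph of pseudocircles (not merely a $2$-connected $4$-regular planar graph), and it is moreover unclear whether a stacked triangulation can even carry a $5$-fold rotational symmetry about a degree-$5$ vertex, since stacked triangulations always contain several low-degree vertices that tend to obstruct such symmetries. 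The cleanest resolution is probably to carry both structures through the recursion in parallel --- the growing arrangement together with the data of its premedial graph and its three monochromatic Hamilton cycles --- so that $\triangle$-saturation, $5$-fold symmetry, realizability, and trihamiltonicity are all maintained at once; pinning down one explicit doubling operation that does all of this simultaneously is the crux of the argument.
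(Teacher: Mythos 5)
The statement you are addressing is stated in the paper as Conjecture~\ref{conj:pentagon}: the authors do \emph{not} prove it, and they explicitly remark that only the relaxation to general $4$-regular planar graphs (via Koester's crowning) is a known result. Your strategy --- manufacture an infinite family of $\triangle$-saturated intersecting arrangements whose cubic premedial graphs have a pentagonal face, a trihamiltonian $3$-edge-coloring, and $5$-fold rotational symmetry, then invoke Theorem~\ref{thm:ve-critical} --- is exactly the natural attack suggested by the paper's machinery, but it does not close the argument. The step you yourself flag as ``the crux,'' namely that some equivariant doubling operation preserves trihamiltonicity of the premedial graph (and, when a full orbit of five pseudocircles is doubled simultaneously, also preserves $\triangle$-saturation), is precisely the missing ingredient: it is established nowhere in the paper and nowhere in your proposal. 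The paper only exhibits finitely many arrangements (on $6$, $10$, and $16$ great-pseudocircles, Figures~\ref{fig:cycle3-koester}, \ref{fig:cycle3-n10}, \ref{fig:cycle3-n16}) meeting the hypotheses of Theorem~\ref{thm:ve-critical}, plus three sporadic examples whose edge-criticality was checked by computer; that is exactly why the statement remains a conjecture.

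Two further cautions about your fallback routes. Doubling a single rotation-invariant pseudocircle keeps the $5$-fold symmetry but replaces that pseudocircle by a cyclic bundle whose new crossings must be threaded consistently into all three monochromatic cycles of $G$; the resulting $3$-edge-coloring of the new premedial graph need not be unique, so Fowler's criterion gives you nothing, and no direct argument for trihamiltonicity is offered. The stacked-triangulation route faces the obstruction you already suspect: every stacked triangulation on more than four vertices has degree-$3$ vertices created by the last stacking steps, and even if an order-$5$ automorphism fixing a degree-$5$ vertex can be arranged, you would still have to show that the medial graph of the dual is realizable as an arrangement of \emph{pseudocircles}, which is a genuine extra condition beyond being a $2$-connected $4$-regular planar graph. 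As it stands, your proposal is a program for attacking the conjecture, not a proof of it.
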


Relaxing the condition of the conjecture to 4-regular planar graphs, this is a known result of Koester (see Proposition  \ref{prop:koester_edge_critical}).

\section{Fractional colorings}
\label{sec:fractional}

In this section, we investigate fractional colorings of arrangements. A
\emph{$b$-fold coloring} of a graph~$G$ with $m$ colors is an assignment of a
set of $b$ colors from $\{1,\ldots,m\}$ to each vertex of~$G$ such that the
color sets of any two adjacent vertices are disjoint.  The \emph{$b$-chromatic
  number}~$\chi_b(G)$ is the minimum $m$ such that $G$ admits a $b$-fold
coloring with $m$ colors. The \emph{fractional chromatic number} of $G$ is
$\chi_f(G) := {\displaystyle \lim_{b \to \infty}}\frac{\chi_b(G)}{b} =
{\displaystyle \inf_{b}} \frac{\chi_b(G)}{b}$.  With $\alpha(G)$ being the
independence number of $G$ and~$\omega(G)$ being the clique number of $G$, the following
inequalities hold:
\begin{equation}
\label{equation:alpha-chi-relation}
\max \left\{\frac{|V|}{\alpha(G)},\omega(G) \right\} \le \chi_f(G)
\le \frac{\chi_b(G)}{b} \le \chi(G).
\end{equation}

The fractional chromatic number forms a natural lower bound for the chromatic
number of graphs. While the chromatic number of quite some intersecting arrangements of
pseudocircles is four, at least their fractional chromatic number is always close to three:

\begin{theorem} \label{theorem:chifrac_3_2n}
  Let $G$ be the arrangement graph
  of a simple intersecting arrangement $\AA$ of $n$ pseudocircles, then
  $\chi_f (G)\le 3+\frac{6}{3n-2} = 3+\frac{2}{n}+o\left(\frac{1}{n}\right)$.  In
  particular, if $v$ denotes the number of vertices of $G$, then
  $\chi_f(G) \le 3+\frac{2}{\sqrt{v}}+o\left(\frac{1}{\sqrt{v}}\right)$.
\end{theorem}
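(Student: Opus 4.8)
The plan is to establish the bound $\chi_f(G) \le 3 + \frac{6}{3n-2}$ by exhibiting a large collection of independent sets that together cover every vertex a uniform number of times, and then apply the fractional-clique/LP-duality description of $\chi_f$; equivalently, to construct a $b$-fold coloring with $m$ colors for suitable $b$ and $m$ with $m/b$ matching the claimed bound. The natural source of independent sets is the structure of an intersecting arrangement: on each pseudocircle $C_i$ the $2(n-1)$ vertices (the intersections with the other $n-1$ pseudocircles) appear in cyclic order, and any "every third vertex" pattern along a single pseudocircle is independent along that circle. The key point to exploit is that $G$ is $4$-regular and, more importantly, that the arrangement graph decomposes edge-wise into the $n$ pseudocircles, each contributing a closed walk; so a set that picks vertices sparsely in the cyclic order of \emph{every} pseudocircle simultaneously will be independent in $G$.

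First I would set up the counting: $G$ has $v = n(n-1)$ vertices and $2v = 2n(n-1)$ edges, and I would look for an independent set of size close to $\frac{v}{3} = \frac{n(n-1)}{3}$, matching the trivial lower bound $\chi_f \ge v/\alpha(G)$. Since $2(n-1)$ is not in general divisible by $3$, one cannot perfectly $3$-partition the vertices on a single pseudocircle; the loss of a constant number of vertices per pseudocircle (order $n$ vertices total) is exactly what produces the additive $O(1/n)$ term. Concretely I expect to build, for a suitable parameter, a family of independent sets indexed by shift vectors assigning to each pseudocircle an offset in $\ZZ_{2(n-1)}$ (or a coarser cyclic group), show each resulting set is independent in $G$ by checking the two edges at each chosen vertex lie along two pseudocircles on which neighbors are not chosen, and then average: by symmetry every vertex is covered the same fraction of the time, giving a fractional cover of weight $\frac{v}{\alpha} \le 3 + \frac{6}{3n-2}$ after the arithmetic is done carefully. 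The second statement, $\chi_f(G) \le 3 + \frac{2}{\sqrt v} + o(1/\sqrt v)$, then follows immediately by substituting $v = n(n-1) = n^2(1-o(1))$, so $n = \sqrt v (1+o(1))$, into $3 + \frac{2}{n} + o(1/n)$.

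The main obstacle I anticipate is the \emph{simultaneous} independence condition: choosing vertices periodically along one pseudocircle is easy, but a vertex $v = C_i \cap C_j$ lies on two pseudocircles, so a valid pattern must be sparse in the cyclic orders of $C_i$ \emph{and} $C_j$ at once, for all $i,j$ — and these cyclic orders interact in a way that depends on the combinatorics of the particular arrangement (there is no global coordinate). The way to get around this is probably to not insist on a single clever independent set but to use a randomized/rotational construction: pick the offsets so that the induced choice on each pseudocircle is "every third vertex starting from a random position," argue that the expected number of chosen vertices is $\ge \frac{v}{3} - cn$ for an absolute constant $c$, and that the chosen set is \emph{always} independent because at any chosen vertex both incident edges go to vertices that are neighbors along one of the two pseudocircles and hence excluded by the "every third" rule on that pseudocircle — the subtlety being to align the gap-of-$3$ patterns on the two pseudocircles through the shared vertex consistently, which forces the careful bookkeeping of where the leftover one or two vertices per pseudocircle go. Once independence is guaranteed deterministically and the size bound $\alpha(G) \ge \frac{3n^2 - \Theta(n)}{9}$ (i.e. $\ge \frac{v}{3} - \Theta(n)$) is in hand, plugging into $\chi_f(G) \le \frac{v}{\alpha(G)}$ and simplifying to $3 + \frac{6}{3n-2}$ is routine algebra.
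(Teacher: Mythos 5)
There is a genuine gap, in fact two. First, your concluding step ``plugging into $\chi_f(G)\le \frac{v}{\alpha(G)}$'' uses an inequality that goes the wrong way: the inverse independence ratio is a \emph{lower} bound on $\chi_f$ (see equation~(\ref{equation:alpha-chi-relation}) in the paper), not an upper bound, and arrangement graphs are not vertex-transitive, so equality cannot be assumed. To upper-bound $\chi_f$ you must either exhibit an explicit $b$-fold colouring with $m$ colours and $m/b\le 3+\frac{6}{3n-2}$, or (by LP duality) show that for \emph{every} nonnegative vertex weighting $w$ there is an independent set of weight at least $\bigl(\frac{1}{3}-\frac{2}{9n}\bigr)w(V_G)$. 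You gesture at the first route via uniform coverage by a family of shifted independent sets, but uniform coverage is exactly what the lack of symmetry prevents; the paper takes the second route, working with an arbitrary weighting throughout and choosing the pseudocircle of minimum weight.

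Second, and more fundamentally, the construction of the independent sets is not actually carried out. The ``every third vertex'' pattern must be simultaneously sparse in the cyclic orders of both pseudocircles through each chosen vertex, for all pairs at once, and these cyclic orders are governed by the combinatorics of the arrangement with no global coordinate to align them. You correctly identify this as the main obstacle, but the proposed fix (random offsets plus ``careful bookkeeping'') does not resolve it: if a vertex is selected whenever it is in the chosen residue class on \emph{one} of its two circles, independence fails along the other circle; if it must be selected on \emph{both}, the expected size drops to roughly $v/9$. The paper avoids this entirely with a different decomposition: fix a pseudocircle $C$; the Snoeyink--Hershberger sweeping theorem yields an acyclic orientation of the inside (and of the outside) with in-degrees at most $2$, so $G[V_I]$ and $G[V_O]$ are $3$-colourable, while $G[V_C]$ is bipartite; independent sets are then formed by taking a random colour class inside, a random colour class outside, and a random side of the bipartition of the unblocked vertices of $C$, and the $\frac{2}{9}$ survival probability on $C$ together with the minimum-weight choice of $C$ gives the stated constant. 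Without a replacement for the sweep argument (or some other way to certify independence across circles), your approach does not go through.
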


\begin{proof}
  Fix an arbitrary circle $C \in \AA$ and let $V_C \subseteq V_G$ be the
  vertex set of $C$. Let $V_G\setminus V_C=V_I \cup V_O$, where $V_I$ and
  $V_O$ are the sets of vertices inside or outside of $C$, respectively.

\paragraph{Claim 1.}
The graphs $G[V_I]$ and $G[V_O]$ are $3$-colorable.
\begin{proof}
  We prove the claim for $G[V_I]$, the proof for $G[V_O]$ is analogous.  Let
  $C_0 \notin \AA$ be a tiny circle in some face of the arrangement in the
  interior of~$C$. The Sweeping Theorem of Snoeyink and Hershberger
  \cite[Theorem~3.1]{SnoeyinkHershberger1991} asserts that there exists a
  sweep which continuously transforms $C_0$ into $C$ such that at any time
  $\AA\cup C_0$ is an arrangement of pseudocircles.  Let $t = |V_I|$ and let
  $\pi=(v_1,v_2,\ldots,v_{t})$ be the ordering of the vertices of $V_I$
  induced by this sweep, i.e., $v_i$ for $i \in \{1,\ldots,t\}$ is the $i$-th
  vertex met by the sweep-pseudocircle $C_0$. Orient each edge of~$G[V_I]$ from the
  vertex of smaller index to the vertex of larger index. Note that on every
  pseudocircle $C'\in \AA$ this orientation induces at most two directed paths that
  share the starting point, the first vertex of $C'$ met by $C_0$.
  At every vertex $v \in V_I$ two pseudocircles cross and $v$ has at
  most one predecessor on each of the two pseudocircles (here we use the fact that
  $\AA$ is an intersecting arrangement, and hence every pseudocircle of $\AA$
  different from $C$ intersects both the interior and the exterior of
  $C$). Hence, in the acyclic orientation of $G$ defined above, every
  $v \in V_I$ satisfies $\indeg(v) \leq 2$. Thus, the greedy algorithm
  with the ordering $\pi$ yields a $3$-coloring of $G[V_I]$.
\end{proof}

Let us pause to note that just on the basis of this first claim we get
$\chi_f (G)\le 3+\frac{6}{n-2}$ which is not too far from the bound given in the
theorem. Indeed if for each pseudocircle $C$ of the arrangement we use 3 colors to
color $V\setminus V_C$, then every vertex receives $n-2$ colors, whence we
obtain a $b$-coloring with $b=n-2$ using $3n$ colors in total, i.e.,
$\chi_{n-2} (G)\le 3n$.

\paragraph{Claim 2.}
The graph $G[V_C]$ is $2$-colorable.
\begin{proof}
  Let $F$ be a face of the planar graph $G[V_C]$. Each vertex of $F$ is a
  crossing of $C$ with some $C'\neq C$ and each $C'\neq C$ contributes 0 or 2
  vertices to the boundary of $F$. This shows that every face of $G[V_C]$ is
  even whence $G[V_C]$ is a bipartite graph.
\end{proof}

\paragraph{Claim 3.}
For every weighting $w:V_G \rightarrow [0,\infty)$ there is an independent set
$I$ of $G$ such that $w(I) \ge (\frac{1}{3}-\frac{2}{9n})w(V_G)$.
\begin{proof}
  Let $C \in \AA$ be a pseudocircle with minimal weight $w(V_C)$.  Let
  $I_1, I_2, I_3$ and $J_1, J_2, J_3$ denote the $3$ color classes of a proper
  $3$-coloring of $G[V_I]$ and $G[V_O]$, respectively (Claim 1).  For
  $(i,j) \in \{1,2,3\}^2$, let $I_{i,j}:=I_i\cup J_j$ and let
  $X_{i,j} \subseteq V_C$ denote the set of vertices on $C$ with no neighbor
  in $I_{i,j}$. The subgraph $G[X_{i,j}]$ of $G[V_C]$ is $2$-colorable (Claim
  2). Let $X_{i,j}^1, X_{i,j}^2$ denote the color classes of such a coloring,
  and define independent sets $I_{i,j,k}:=I_{i,j} \cup X_{i,j}^k$ in $G$ for
  $k=1,2$.
	
  With $\textbf{I}$ we denote the random independent set $I_{i,j,k}$ with
  $(i,j,k)$ being chosen from the uniform distribution on
  $\{1,2,3\}\times\{1,2,3\}\times\{1,2\}$.  In the following we bound the
  expected weight $\mathbb{E}(w(\mathbf{I}))$.
	
  For every vertex $x \in V_C$, we have $x \in X_{i,j}$ if and only if none of
  the two neighbors $x^O$, $x^I$ of $x$ in $V_O$ respectively $V_I$ lie in
  $I_i$ respectively $J_j$. Since $i$, $j$ and $k$ are sampled independently,
  we conclude
  $$
  \mathbb{P}(x \in \mathbf{I}) = \frac{1}{2}\mathbb{P}(x \in X_{i,j})
  =\frac{1}{2}\mathbb{P}(x^O \notin I_i)\mathbb{P}(x^I \notin J_j)=
  \frac{1}{2}\Big(\frac{2}{3}\Big)^2=\frac{2}{9}.
  $$

  This implies that
  $$
  \mathbb{E}(w(\mathbf{I}))= \mathbb{E}(w(I_i \cup
  J_j))+\mathbb{E}(w(X_{i,j}^k)) =\frac{1}{3}(w(V_G)-w(V_C))+\frac{2}{9} \cdot
  w(V_C) =\frac{1}{3}w(V_G)-\frac{1}{9}w(V_C).
  $$
  Since $C$ was chosen as a pseudocircle of minimum weight, and since
  $\sum_{C' \in \AA}{w(V_{C'})}=2w(V_G)$, we conclude that
  $w(V_C) \le \frac{2}{n}w(V_G)$ and hence
  $\mathbb{E}(w(\mathbf{I})) \ge (\frac{1}{3}-\frac{2}{9n})w(V_G)$. Since
  $\mathbf{I}$ is ranging in the independent sets of $G$, this implies the
  existence of an independent set with total weight at least
  $(\frac{1}{3}-\frac{2}{9n})w(V_G)$.
\end{proof}

It is well known that the fractional chromatic number can be obtained as the
optimal value of the linear program
$$
\min \mathbf{1}\cdot x \hbox{\quad subject to\quad } Mx \geq \mathbf{1},
\quad x \geq 0
$$
where $M$ is the incidence matrix of vertices versus independent sets.  The
dual of the program is
$ \max \mathbf{1}\cdot w \hbox{ subject to } M^T w \leq \mathbf{1},\,\,w\geq
0$. Here $w$ can be interpreted as a weighting on the vertices. If $w$ is an
optimal weighting for this program, then $\chi_f(G) = w(V_G)$. With Claim~3 we
get	
$1 \geq \mathbb{E}(w(\mathbf{I})) \geq (\frac{1}{3}-\frac{2}{9n})w(V_G)$.
Hence, $\chi_f(G) \le \frac{1}{\frac{1}{3}-\frac{2}{9n}}=3+\frac{6}{3n-2}$.
\end{proof}

We note that for 4-vertex-critical graphs $G$, the following simple bound on
the fractional chromatic number further improves the bound given in
Theorem~\ref{theorem:chifrac_3_2n}.

\begin{proposition}
  
  If $G$ is a $4$-vertex-critical graph on $v$ vertices, then
  $\chi_f(G) \le 3+\frac{3}{v-1}$.
\end{proposition}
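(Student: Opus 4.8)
The plan is to exploit 4-vertex-criticality directly, together with the variational (dual LP) characterization of $\chi_f$ used in the proof of Theorem~\ref{theorem:chifrac_3_2n}. Recall that $\chi_f(G) = \max\{ w(V) : w\ge 0,\ w(I)\le 1 \text{ for every independent set } I\}$. So it suffices to show that for \emph{every} nonnegative weighting $w$ on $V(G)$ there is an independent set $I$ with $w(I) \ge \frac{1}{3 + 3/(v-1)} w(V) = \frac{v-1}{3v} w(V)$; equivalently, $3v\cdot w(I) \ge (v-1)\,w(V)$ for a suitable choice of $I$.

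The key structural input is the following consequence of 4-vertex-criticality: for each vertex $u\in V(G)$, the graph $G - u$ is $3$-colorable, so $V(G)\setminus\{u\}$ partitions into three independent sets $I_1^u, I_2^u, I_3^u$ of $G$. I would average over a random choice of $u$ and a random color class. Concretely, sample $u$ uniformly from $V(G)$ and then $j$ uniformly from $\{1,2,3\}$, and set $\mathbf{I} = I_j^u$. For a fixed vertex $x$, conditioned on $u\ne x$ the vertex $x$ lies in exactly one of the three classes $I_1^u,I_2^u,I_3^u$, so $\mathbb{P}(x\in\mathbf{I}\mid u\ne x)=\frac13$; and $\mathbb{P}(u\ne x)=\frac{v-1}{v}$, while $x$ is never in $\mathbf{I}$ when $u=x$. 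Hence $\mathbb{P}(x\in\mathbf{I}) = \frac{1}{3}\cdot\frac{v-1}{v}$ for every $x$, and by linearity $\mathbb{E}(w(\mathbf{I})) = \frac{v-1}{3v}\,w(V)$. Therefore some independent set $I$ achieves $w(I)\ge \frac{v-1}{3v}\,w(V)$, which is exactly the bound claimed, since $\frac{3v}{v-1} = 3 + \frac{3}{v-1}$.

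To finish, I would plug this into the LP duality: taking $w$ to be an optimal dual weighting gives $\chi_f(G) = w(V)$, while the constraint $w(I)\le 1$ applied to the independent set produced above yields $1 \ge w(I) \ge \frac{v-1}{3v}\,\chi_f(G)$, i.e. $\chi_f(G) \le \frac{3v}{v-1} = 3+\frac{3}{v-1}$. I should double-check that the argument only uses vertex-criticality (it only needs $G-u$ to be $3$-colorable for every single vertex $u$, which is precisely the definition of $4$-vertex-critical together with $\chi(G)=4$), and that $G$ has at least one vertex so the division is legitimate.

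**The main obstacle** here is essentially nil: the only subtlety is the bookkeeping in the probability computation — making sure the ``lost'' case $u=x$ is accounted for cleanly and that the same bound $\frac{1}{3}\cdot\frac{v-1}{v}$ holds uniformly for all $x$ regardless of which coloring of $G-u$ we fixed (it does, since in \emph{any} proper $3$-coloring of $G-x'$ with $x'\ne x$, vertex $x$ gets exactly one of the three colors). One could phrase the whole thing deterministically by just summing $\sum_{u}\sum_{j} w(I_j^u) = \sum_u \big(w(V)-w(u)\big) = (v-1)w(V)$ over the $3v$ independent sets and invoking pigeonhole, avoiding probabilistic language altogether; I would likely present it that way for brevity.
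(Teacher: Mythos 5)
Your proof is correct and rests on exactly the same combinatorial object as the paper's: the $3v$ independent sets arising from a proper $3$-coloring of $G-x$ for each vertex $x$, with each vertex lying in exactly $v-1$ of them. The paper packages this as an explicit $(v-1)$-fold coloring with $3v$ colors (the primal view, giving $\chi_{v-1}(G)\le 3v$), whereas you pass through the dual LP and an averaging argument, but these are equivalent bookkeeping for the same construction.
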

\begin{proof}
  We show that $G$ admits a $(v-1)$-fold coloring using $3v$ colors, which
  will imply $\chi_{v-1}(G) \le 3v$ and hence
  $\chi_f(G) \le \frac{\chi_{v-1}(G)}{v-1}\le \frac{3v}{v-1}=3+\frac{3}{v-1}$.
    
  The coloring can be obtained as follows: For every vertex $x \in V(G)$, fix
  a proper $3$-coloring
  $c_x:V(G) \setminus \{x\} \rightarrow \{C_{1,x},C_{2,x},C_{3,x}\}$ of the
  vertices in $G-x$ (which exists since $G$ is $4$-vertex-critical). Here,
  $\{C_{1,x},C_{2,x},C_{3,x}\}$ is a set of $3$ colors chosen such that these
  color-sets are pairwise disjoint for different vertices $x$.
    
  We now define a $(v-1)$-fold coloring of $G$ by assigning to every
  $w \in V(G)$ the following set of $v-1$ colors
  $\{c_x(w)|x \in V(G), x \neq w\}$. Since every $c_x$ is a proper coloring of
  $G$, these color-sets are disjoint for adjacent vertices in
  $G$. Furthermore, the coloring uses only colors in
  $\{\, C_{1,x},C_{2,x},C_{3,x} \mid x \in V(G) \,\}$, so $3v$ colors in total, and this
  proves the above claim and concludes the proof.
\end{proof}

\subsection{Arrangements with dense intersection graphs}
Given an arrangement $\mathcal{A}$ of pseudocircles, the
\emph{intersection graph} of $\mathcal{A}$ is the simple graph
$H_{\mathcal{A}}$ with the pseudocircles in $\mathcal{A}$ as the vertex-set in
which two distinct pseudocircles $C_1, C_2 \in \mathcal{A}$ share an edge if
and only if they cross. Using this notion, we see that intersecting
arrangements of pseudocircles are exactly the arrangements whose
intersection graph is a complete graph. Looking at
Theorem~\ref{theorem:chifrac_3_2n}, we were able to show that the fractional
chromatic number of such arrangements is close to $3$. In this section we
discuss possible generalizations of this result by extending this bound to
arrangements $\mathcal{A}$ for which $H_{\mathcal{A}}$ is sufficiently
dense. In particular we have the following question.
\begin{question}\label{question:largemindegree}
  For $k \in \mathbb{N}$, let $\chi_{\ge k}$ denote the supremum of
  $\chi_f(G)$ over all arrangement graphs~$G$ of arrangements $\mathcal{A}$ of
  pseudocircles such that the minimum degree $\delta(H_{\mathcal{A}})$ is at least
  $k$. Is it true that $\chi_{\ge k} \rightarrow 3$ for
  $k \rightarrow \infty$?
\end{question}
In the following, we show two weaker statements related to this question. The
first one shows that if we require the minimum degree in the intersection
graph of an arrangement to be sufficiently large compared to $n$, then we can
indeed conclude that the fractional chromatic number of the arrangement graph
is close to $3$. The second statement answers a relaxed version of
Question~\ref{question:largemindegree} by showing that for large minimum
degree in the intersection graph, the inverse independence ratio
$\frac{|V(G)|}{\alpha(G)}$ of the arrangement graph $G$ approaches $3$.
\begin{theorem}\label{theorem:largedegree}
  Let $d>\frac{1}{2}$ and $n \in \mathbb{N}$. Let $\mathcal{A}$ be a simple
  arrangement of $n$ pseudocircles such that $\delta(H_{\mathcal{A}}) \ge
  dn$. Then for the arrangement graph $G$ of $\mathcal{A}$, we have
  $\chi_f(G) \le \frac{3}{2d-1}$.
\end{theorem}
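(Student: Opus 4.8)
The plan is to refine the proof of Theorem~\ref{theorem:chifrac_3_2n} by additionally randomizing over the choice of the distinguished pseudocircle, so that the density hypothesis $\delta(H_{\mathcal A})\ge dn$ enters through the probability that a uniformly random pseudocircle ``sees'' a given vertex. As at the end of the proof of Theorem~\ref{theorem:chifrac_3_2n}, it then suffices to construct one random independent set $\mathbf I$ of $G$ with $\mathbb P[v\in\mathbf I]\ge\frac{2d-1}{3}$ for every vertex $v$: applying this to an optimal dual weighting $w\colon V(G)\to[0,\infty)$ --- for which $\chi_f(G)=w(V(G))$ and every independent set of $G$ has $w$-weight at most $1$ --- gives $1\ge\mathbb E[w(\mathbf I)]=\sum_v w(v)\,\mathbb P[v\in\mathbf I]\ge\frac{2d-1}{3}w(V(G))=\frac{2d-1}{3}\chi_f(G)$, i.e.\ $\chi_f(G)\le\frac{3}{2d-1}$. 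For a pseudocircle $C\in\mathcal A$, call a vertex $v$ of $G$ \emph{good for $C$} if $v$ lies on $C$ or $v$ is a crossing of two pseudocircles both of which cross $C$; write $V^{g}_C$, $V^{g}_I$, $V^{g}_O$ for the good vertices that lie on $C$, strictly inside $C$, and strictly outside $C$, respectively. The vertices that are \emph{not} good for $C$ are precisely those lying on some pseudocircle disjoint from $C$.

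First I would establish the analogues of Claims~1 and~2 from the proof of Theorem~\ref{theorem:chifrac_3_2n}: the graphs $G[V^g_I]$ and $G[V^g_O]$ are $3$-colorable, and $G[V_C]$ is bipartite. The latter is literally Claim~2. For the former, reuse the Snoeyink--Hershberger sweep of Claim~1: sweep a tiny circle inside $C$ out to $C$, order the vertices of $V_I$ by the sweep, and orient each edge from smaller to larger index. For a pseudocircle $C'$ \emph{that crosses $C$}, the part of $C'$ inside $C$ is a single arc, so this orientation restricted to $C'$ consists of at most two directed paths sharing their source; hence every vertex of $V^g_I$ (which lies on two pseudocircles, both crossing $C$) has at most one predecessor on each of these two pseudocircles, so in-degree at most $2$ in $G[V^g_I]$, and the greedy algorithm yields a $3$-coloring. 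Restricting to good vertices is genuinely necessary here, since if a $4$-chromatic sub-arrangement sits nested inside $C$ then $G[V_I]$ itself is $4$-chromatic.

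Given proper $3$-colorings of $G[V^g_I]$ and $G[V^g_O]$ with color classes $I_1,I_2,I_3$ and $J_1,J_2,J_3$, I would build the random independent set exactly as in Claim~3, but relative to the good vertices only. For $(i,j)\in\{1,2,3\}^2$ let $X_{i,j}\subseteq V_C$ be the set of vertices on $C$ with no neighbor in $I_i\cup J_j$, $2$-color $G[X_{i,j}]$ into $X^1_{i,j},X^2_{i,j}$, and set $I_{i,j,k}:=I_i\cup J_j\cup X^k_{i,j}$; this is an independent set of $G$ because the interior and exterior of $C$ are separated and $X_{i,j}$ avoids $I_i\cup J_j$ by construction. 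Choosing $(i,j,k)$ uniformly from $\{1,2,3\}^2\times\{1,2\}$ and writing $\mathbf I_C:=I_{i,j,k}$, one gets $\mathbb P[v\in\mathbf I_C\mid C]=\tfrac13$ for $v\in V^g_I\cup V^g_O$; for $v\in V_C$ the vertex $v$ has at most one neighbor strictly inside $C$ and at most one strictly outside, so $v\in X_{i,j}$ with probability at least $(2/3)^2$ and hence $\mathbb P[v\in\mathbf I_C\mid C]\ge\tfrac12\cdot\tfrac49=\tfrac29$; and vertices not good for $C$ are never in $\mathbf I_C$.

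Finally, pick $C$ uniformly at random among the $n$ pseudocircles, independently of $(i,j,k)$, and set $\mathbf I:=\mathbf I_C$. For a vertex $v=C_1\cap C_2$ we have $\mathbb P_C[v\in V_C]=2/n$, and $v$ is good for $C$ whenever $C\in\{C_1,C_2\}$ or $C$ crosses both $C_1$ and $C_2$; since the set of common neighbors of $C_1$ and $C_2$ in $H_{\mathcal A}$ is contained in $\mathcal A\setminus\{C_1,C_2\}$ and has size at least $\deg_{H_{\mathcal A}}(C_1)+\deg_{H_{\mathcal A}}(C_2)-(n-2)\ge(2d-1)n+2$, we obtain $p_g(v):=\mathbb P_C[v\text{ good for }C]\ge\frac2n+\frac{(2d-1)n+2}{n}=(2d-1)+\frac4n$. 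Combining the conditional bounds with these probabilities,
\[
\mathbb P[v\in\mathbf I]\ \ge\ \tfrac13\Big(p_g(v)-\tfrac2n\Big)+\tfrac29\cdot\tfrac2n\ =\ \tfrac13\,p_g(v)-\tfrac{2}{9n}\ \ge\ \tfrac{2d-1}{3}+\tfrac{10}{9n}\ >\ \tfrac{2d-1}{3}
\]
for every vertex $v$, which gives $\chi_f(G)\le\frac{3}{2d-1}$ as explained above. The step I expect to be the main obstacle is the colorability claim for $G[V^g_I]$ and $G[V^g_O]$: one must verify that the sweep argument of Claim~1 still produces an acyclic orientation of in-degree at most $2$ once the pseudocircles not crossing $C$ are discarded, and appreciate that this discarding is unavoidable. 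The rest is a direct adaptation of the proof of Theorem~\ref{theorem:chifrac_3_2n} together with the elementary count $(2d-1)n$ on the number of common neighbors of $C_1$ and $C_2$ in $H_{\mathcal A}$; the positive slack $\tfrac{10}{9n}$ comfortably absorbs the loss of $\tfrac19 w(V_C)$ incurred on the vertices of $C$, so no ``minimum-weight pseudocircle'' trick is needed.
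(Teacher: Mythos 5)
Your proof is correct and follows essentially the same route as the paper's: your ``good for $C$'' vertices are exactly the complement of the paper's set $D_C$ of vertices on pseudocircles disjoint from $C$, and averaging over a uniformly random $C$ is the same computation as the paper's choice of a pseudocircle minimizing $w(V_C)+3w(D_C)$. The only flaw is a harmless off-by-two: since $C_2\in N(C_1)$ and $C_1\in N(C_2)$, the number of common neighbors in $\mathcal A\setminus\{C_1,C_2\}$ is only $\deg(C_1)+\deg(C_2)-n\ge(2d-1)n$ rather than $(2d-1)n+2$, which still leaves positive slack and does not affect the conclusion.
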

\begin{proof}
  The proof is similar to the one of Theorem~\ref{theorem:chifrac_3_2n}, and
  we borrow the notations from that proof. For a fixed pseudocircle $C \in \AA$, we
  further define $D_C \subseteq V_G\setminus V_C$ as the union of $V_{C'}$
  over all $C' \in \AA$ for which $C'$ and $C$ are disjoint. Also the
  following two claims hold for every choice of $C \in \AA$, with
  word-to-word the same proofs as for the according claims in the proof of
  Theorem~\ref{theorem:chifrac_3_2n}.
	
\paragraph{Claim 1.}
The graph $G-(V_C \cup D_C)$ is $3$-colorable.

\paragraph{Claim 2.}
The graph $G[V_C]$ is $2$-colorable.

\paragraph{Claim 3.} For every weighting $w:V_G \rightarrow [0,\infty)$ there
is an independent set $I$ of $G$ such that
$w(I) \ge (\frac{1}{3}-\frac{2(1-d)}{3})w(V_G)$.
\begin{proof}
  Fix $C \in \AA$ as a pseudocircle minimizing $w(V_C)+3w(D_C)$. In the
  following we fix some notation analogous to the one in the proof of
  Theorem~\ref{theorem:chifrac_3_2n}: We denote by $I_1, I_2, I_3$ and
  $J_1, J_2, J_3$ the color classes of a $3$-coloring of
  $G[V_I]-(V_I \cap D_C)$ and $G[V_O]-(V_O \cap D_C)$, respectively (which
  exist by Claim 1).  For $(i,j) \in \{1,2,3\}^2$, we denote again
  $I_{i,j}:=I_i\cup J_j$ and by $X_{i,j} \subseteq V_C$ the set of vertices on
  $C$ with no neighbor in $I_{i,j}$. Let $X_{i,j}^1, X_{i,j}^2$ denote the
  color classes of a $2$-coloring of $G[X_{i,j}] \subseteq G[V_C]$, and define
  independent sets $I_{i,j,k}:=I_{i,j} \cup X_{i,j}^k$ in $G$ for $k=1,2$.
		
  Again we let $\textbf{I}$ denote the random set $I_{i,j,k}$ where $(i,j,k)$
  is chosen uniformly at random from $\{1,2,3\}\times\{1,2,3\}\times\{1,2\}$.
		
  Every vertex $x \in V_C$ belongs to $X_{i,j}$ for at least 4 different
  choices of $(i,j)$. If $x$ has neighbors in $I_a$ and $J_b$, then it belongs
  to $X_{i,j}$ for $i \in \{1,2,3\}\setminus\{a\}$ and
  $j \in \{1,2,3\}\setminus\{b\}$.  Therefore,	
  $$
  \mathbb{E}(w(X_{i,j}^k))=
  \frac{1}{3}\cdot\frac{1}{3}\cdot\frac{1}{2}\sum_{i',j',k'} w(X_{i',j'}^{k'})
  = \frac{1}{18}\sum_{i',j'} w(X_{i',j'}) \geq \frac{1}{18}\cdot 4w(V_C) =
  \frac{2}{9}w(V_C).
  $$
  This implies that
  $$
  \mathbb{E}(w(\mathbf{I}))= \mathbb{E}(w(I_i \cup
  J_j))+\mathbb{E}(w(X_{i,j}^k)) \geq \frac{1}{3}(w(V_G)-w(V_C \cup
  D_C))+\frac{2}{9} \cdot w(V_C)
  $$\vskip-5mm
  $$
  =\frac{1}{3}w(V_G)-\frac{1}{9}(w(V_C)+3w(D_C))
  $$
  Since $C$ was chosen as a pseudocircle minimizing $w(V_C)+3w(D_C)$, we have
  $w(V_C)+3w(D_C) \le \frac{1}{n}\sum_{C' \in \AA}{(w(V_{C'})+3w(D_{C'}))}.$
  Let $v$ be a vertex in the intersection of two pseudocircles $C_1$ and $C_2$. For $i=1,2$
  pseudocircle $C_i$ is disjoint from at most $(n-1) -dn = (1-d)n-1$ other pseudocircles.
  Hence, $v$ is in at most $2(1-d)n -2$ sets $D_{C'}$ and we get
  $$
  \sum_{C' \in \AA}{(w(V_{C'})+3w(D_{C'}))} \leq
  \sum_{C' \in \AA}w(V_{C'})+3\sum_{C' \in \AA}w(D_{C'}) \leq
  $$\vskip-3mm
  $$  
  2\sum_{v \in V(G)}{w(v)} + (2(1-d)n -2)3\sum_{v \in V(G)}{w(v)} \leq
  6(1-d)n\cdot w(V_G).
  $$	
  Consequently $w(V_C)+3w(D_C) \le 6(1-d)w(V_G)$ and 
  $\mathbb{E}(w(\mathbf{I})) \ge (\frac{1}{3}-\frac{2(1-d)}{3})w(V_G)$. This
  implies the existence of an independent set with total weight at least
  $(\frac{1}{3}-\frac{2(1-d)}{3})w(V_G)$.
\end{proof}
	
Just as in the proof of Theorem~\ref{theorem:chifrac_3_2n} we express the
fractional chromatic number as the optimal value of the linear program
$ \max \mathbf{1}\cdot w \hbox{ subject to } M^T w \leq \mathbf{1},\,\,w\geq
0$ where $M$ is the incidence matrix of vertices versus independent sets. As
previously, Claim~3 now directly yields that
$\chi_f(G) \le 1/(\frac{1}{3}-\frac{2(1-d)}{3}) = \frac{3}{2d-1}$.
This concludes the proof of Theorem \ref{theorem:largedegree}.
\end{proof}

\begin{proposition}
  Let $G$ be the arrangement graph of a simple arrangement $\mathcal{A}$ of
  pseudocircles with $\delta(H_{\mathcal{A}}) \ge 2$. Then we
  have $\frac{|V(G)|}{\alpha(G)} \le 3+\frac{3}{\delta(H_{\mathcal{A}})-1}$.
\end{proposition}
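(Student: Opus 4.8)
The plan is to deduce the claim from the single inequality $\alpha(G)\ge \frac{|V(G)|-n}{3}$, where $n=|\mathcal{A}|$, together with the easy bound $|V(G)|\ge n\delta$ for $\delta:=\delta(H_{\mathcal{A}})$. The latter holds because each pseudocircle $C\in\mathcal{A}$ crosses at least $\delta$ others and each of these contributes two vertices on $C$, so $|V_C|\ge 2\delta$; summing over the $n$ pseudocircles and using that every vertex of $G$ lies on exactly two of them gives $2|V(G)|=\sum_{C}|V_C|\ge 2n\delta$. Granting the inequality on $\alpha$ and using $|V(G)|\ge n\delta\ge 2n>n$, we obtain $\frac{|V(G)|}{\alpha(G)}\le\frac{3|V(G)|}{|V(G)|-n}\le\frac{3n\delta}{n\delta-n}=\frac{3\delta}{\delta-1}=3+\frac{3}{\delta-1}$, where the middle step uses that $x\mapsto\frac{3x}{x-n}$ is decreasing for $x>n$.

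To prove $\alpha(G)\ge \frac{|V(G)|-n}{3}$ it suffices to exhibit a set $S\subseteq V(G)$ with $|S|\le n$ such that $G-S$ is $3$-colorable, since then $\alpha(G)\ge\alpha(G-S)\ge\frac{|V(G)|-|S|}{3}\ge\frac{|V(G)|-n}{3}$. I would obtain such an $S$ from a global sweep of the arrangement. Namely, by the Sweeping Theorem of Snoeyink and Hershberger~\cite{SnoeyinkHershberger1991}, applied as in the proof of Theorem~\ref{theorem:chifrac_3_2n} but carried all the way across $\mathcal{A}$, there is a continuous motion of a pseudocircle $C_0$, keeping $\mathcal{A}\cup C_0$ an arrangement of pseudocircles at all times, that sweeps each vertex of $G$ exactly once; let $v_1,\dots,v_{|V(G)|}$ be the order in which the vertices are swept, and orient every edge of $G$ from the lower-indexed to the higher-indexed endpoint, which is an acyclic orientation.

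The key structural point is that, since $C_0$ meets each fixed pseudocircle $C'\in\mathcal{A}$ in at most two points throughout the sweep, the portion of $C'$ that has already been swept is always a single sub-arc of $C'$. Consequently, along the cycle formed by $C'$ the sweep order has a unique minimum and a unique maximum $\ell(C')$, and every vertex $v\in V_{C'}\setminus\{\ell(C')\}$ has at most one in-neighbor among its two neighbors on $C'$ (while $\ell(C')$ has two). Hence, taking $S:=\{\ell(C'):C'\in\mathcal{A}\}$ (so $|S|\le n$), every vertex of $V(G)\setminus S$ is the ``last'' vertex of neither of the two pseudocircles through it, and therefore has in-degree at most $2$ in the acyclic orientation restricted to $G-S$. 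Coloring $G-S$ greedily in the sweep order then uses at most $3$ colors, as required.

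The main obstacle is that the averaging over all pseudocircles used in the proof of Theorem~\ref{theorem:largedegree} (choosing $C$ to minimize $w(V_C)+3w(D_C)$) yields a bound that deteriorates with $n$ and cannot be made to depend on $\delta$ alone; the new ingredient is the global sweep together with the observation that exactly one vertex per pseudocircle — its last vertex in the sweep — obstructs the in-degree-$2$ property, so that removing only $n$ vertices suffices. The one point requiring care is the admissibility of this global form of the Sweeping Theorem, i.e.\ sweeping a tiny circle across the whole arrangement rather than into a single pseudocircle; this can be arranged by running the sweep in two stages past a fixed pseudocircle $C^\ast\in\mathcal{A}$, first sweeping a tiny circle from a face outside $C^\ast$ up to $C^\ast$, and then on to a face inside $C^\ast$.
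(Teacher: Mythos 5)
Your proof is correct and takes essentially the same approach as the paper's: a global sweep ordering in which each pseudocircle has a unique vertex with two predecessors on it, removal of that one vertex per pseudocircle leaves a $2$-degenerate and hence $3$-colorable graph, giving $\alpha(G)\ge\frac{1}{3}(|V(G)|-n)$, which combined with $|V(G)|\ge n\,\delta(H_{\mathcal{A}})$ yields the claimed bound.
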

\begin{proof}
  Let $C_0$ and $C_1$ be pseudocircles not belonging to $\mathcal{A}$, such
  that $C_0$ contains all pseudocircles of $\mathcal{A}$ and $C_1$ in its
  exterior, while $C_1$ has all pseudocircles in $\mathcal{A}$ and $C_0$ in
  its interior. By the Sweeping Theorem of Snoeyink and Hershberger
  \cite[Theorem~3.1]{SnoeyinkHershberger1991} there is a linear ordering
  $v_1,\ldots,v_{|V(G)|}$ of the vertices of $G$ such that each
  pseudocircle $C \in \mathcal{A}$ contains a unique vertex $v_C \in V_C$ with
  precisely $2$ predecessors on $C$ in this ordering, while all vertices in
  $V_C\setminus \{v_C\}$ are preceded by at most one other vertex on $C$.  It
  is now clear that the graph $G':=G-\{v_C|C \in \mathcal{A}\}$ is
  $2$-degenerate (since in the induced acyclic orientation of $G'$, every
  vertex has at most one in-edge on each of its two circles, and so the maximum
  in-degree in this orientation is at most $2$). Hence $G'$ is properly
  $3$-colorable by the greedy algorithm. Thus,
  $\alpha(G) \ge \alpha(G') \ge \frac{1}{3}(|V(G)|-|\mathcal{A}|)$. Since
  $\delta(H_{\mathcal{A}}) = k \ge 2$, every pseudocircle contains at least $2k$
  vertices, and hence we have $|V(G)| \ge k |\mathcal{A}|$. We finally
  conclude that
  $$
  \alpha(G) \ge
  \frac{1}{3}\left(|V(G)|-\frac{1}{k}|V(G)|\right)=\frac{k-1}{3k}|V(G)|.
  $$
\end{proof}

\section{Fractionally 3-color\-able 4-edge-critical planar graphs}
\label{sec:gimbel}

On the basis of the \emph{database of pseudocircles}~\cite{pseudocircles_website}
we could compute $\chi$ and $\chi_f$ exhaustively for small arrangements\footnote{
  Computing the fractional chromatic number of a graph is \NP-hard in general
  \cite{LundYannakakis1994}.  For our computations we formulated a linear
  program which we then solved using the MIP solver Gurobi.}.
We found the arrangement depicted in
Figure~\ref{fig:n5_pwi_chi4_1} with $\chi=4$ and $\chi_f=3$. This is a
counterexample to Conjecture~3.2 in Gimbel et al.~\cite{GimbelKLT2019}.

Extending the experiments to small 4-regular planar graphs we found that there
are precisely~17 \mbox{4-regular} planar graphs on 18 vertices with $\chi=4$
and $\chi_f=3$.  They are minimal in the sense that there are no 4-regular
graphs on $n \le 17$ vertices with $\chi=4$ and $\chi_f=3$.  Each of these~17
graphs is 4-vertex-critical and the one depicted in Figure~\ref{fig:crowning1}
is even 4-edge-critical.

Starting with a triangular face in the $4$-edge-critical $4$-regular graph of
Figure~\ref{fig:crowning1} and repeatedly applying Koester's crowning
operation as illustrated in Figure~\ref{fig:crowning2} (which by definition
preserves the existence of a facial triangle), we can deduce the following
theorem.

\begin{theorem} \label{thm:chif3}
  There exists an infinite family of $4$-edge-critical $4$-regular
  planar graphs~$G$ with fractional chromatic number $\chi_f(G)=3$.
\end{theorem}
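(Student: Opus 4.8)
The plan is to obtain the family by induction, starting from one small example and repeatedly applying Koester's crowning operation, and to show that this operation is compatible with fractional $3$-colorings. Concretely, let $G_0$ be the $4$-regular, $4$-edge-critical planar graph on $18$ vertices of Figure~\ref{fig:crowning1}, which (as recorded above) satisfies $\chi(G_0)=4$ and $\chi_f(G_0)=3$ and contains a triangular face. For $k\ge 0$, let $G_{k+1}$ be the result of applying crowning at a triangular face of $G_k$, as in Figure~\ref{fig:crowning2}. I would prove by induction that every $G_k$ is a $4$-regular $4$-edge-critical planar graph with a triangular face and with $\chi_f(G_k)=3$. Since each crowning step strictly increases the number of vertices, this produces infinitely many pairwise non-isomorphic graphs, which is exactly what the theorem asks for.

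The structural part of the induction is essentially due to Koester, and I would simply cite it: crowning is planar and preserves $4$-regularity; by construction it leaves a triangular face in the new graph (so the process never gets stuck); and by Proposition~\ref{prop:koester_edge_critical} it preserves $4$-edge-criticality, in particular $\chi(G_k)=4$ throughout. So the whole burden is on the inductive step $\chi_f(G_k)=3 \Rightarrow \chi_f(G_{k+1})=3$.

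The lower bound $\chi_f(G_{k+1})\ge 3$ is free: $G_{k+1}$ has a triangular face, hence contains $K_3$, hence $\omega(G_{k+1})\ge 3$, and $\chi_f\ge\omega$ by~(\ref{equation:alpha-chi-relation}). For the upper bound I would argue at the level of $b$-fold colorings. Since $\chi_f(G_k)=3$ and the fractional chromatic number of a finite graph is rational and attained at some integer $b$, there is a $b$-fold $3$-coloring $c$ of $G_k$ with palette $\{1,\dots,3b\}$. Let $T$ be the triangle at which we crown. Its three vertices are pairwise adjacent and each gets exactly $b$ of the $3b$ colors, so the color sets of the vertices of $T$ form a partition of $\{1,\dots,3b\}$ into three blocks of size $b$; permuting colors, we may assume it is the standard such partition. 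Crowning replaces the triangular face $T$ (and a bounded amount of local structure) by the fixed crown gadget, while the rest of $G_k$ is left untouched and keeps the coloring $c$. It then suffices to prove the local claim that the crown gadget admits a $b$-fold $3$-coloring with palette $\{1,\dots,3b\}$ that agrees, along its bounded interface, with the coloring already present on the surviving part. Gluing gives a $b$-fold $3$-coloring of $G_{k+1}$, so $\chi_f(G_{k+1})\le 3$, closing the induction.

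The hard part will be this local claim, and, more precisely, making it uniform in $b$. The crown gadget is a fixed finite graph with an interface of bounded size, and after the normalization above the external constraints it must satisfy are prescribed only up to a permutation of colors. So what is really needed is one explicit, scalable fractional $3$-coloring of the crown-plus-interface that restricts to the standard partition on $T$ (and is consistent on the remaining interface vertices) -- for instance an explicit optimal vertex of the fractional-coloring linear program for that fixed finite graph, which after clearing denominators is exactly the required $b_0$-fold coloring for some base multiplicity $b_0$. Passing to a common multiple of $b_0$ and the $b$ coming from $G_k$ (harmless, since blowing up the palette does not change $\chi_f$), the two colorings line up over the same $3b$ colors and combine. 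I expect essentially all of the real content of the proof to sit in producing and verifying this gadget coloring; the induction and the structural claims are then routine.
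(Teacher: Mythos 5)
Your proposal is correct and follows essentially the same route as the paper: start from the $18$-vertex example of Figure~\ref{fig:crowning1}, iterate Koester's crowning at a triangular face, invoke Proposition~\ref{prop:koester_edge_critical} for $4$-edge-criticality, and show crowning preserves $\chi_f=3$ via a $b$-fold coloring in which the triangle's three color blocks partition the palette (this is exactly Lemma~\ref{lem:crowning_chi_f}). The only difference is that the paper handles your ``local claim'' more simply than you anticipate: the crown gadget together with $u,v,w$ admits an ordinary proper $3$-coloring with $u,v,w$ receiving distinct colors, and substituting the three color classes by the blocks $A_1,A_2,A_3$ immediately yields the required $b$-fold coloring, with no need for a separate base multiplicity $b_0$ or common multiples.
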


We prepare the proof of the above result with some background on Koester's
crowing operation from~\cite{Koester1990}.  For a $4$-regular plane graph $G$
and a face $\pentagon$ of odd degree in $G$, we denote by~$\crown{G}{\pentagon}$ the plane graph
obtained by applying the crowning operation to $\pentagon$ in~$G$.
Figure~\ref{fig:krown} shows how to apply the crowning to a triangle and a
pentagon respectively, the general case should be deducible.
Koester proved the following:

   \calc_figscale{20}
    \begin{figure}[htb]
    \centerline{\input{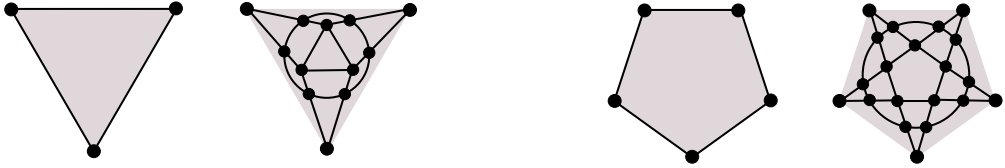}}
    \caption{Crowning of a triangle and a pentagon.\label{fig:krown}}
    \end{figure}

\begin{proposition}[\cite{Koester1990}]\label{prop:koester_edge_critical}
  Let $G$ be a $4$-regular plane graph with a facial triangle $T$. If $G$ is
  $4$-edge-critical, then so is $\crown{G}{T}$.
\end{proposition}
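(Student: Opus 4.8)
The plan is to verify, for $G':=\crown{G}{T}$, the two conditions defining $4$-edge-criticality: (i)~$\chi(G')=4$, and (ii)~$\chi(G'-e)=3$ for every edge $e$ of $G'$. I use the explicit description of the crowning from Figure~\ref{fig:krown}: writing $T=v_1v_2v_3$ (indices mod~$3$), the operation subdivides each edge $v_iv_{i+1}$ of $T$ by a new vertex $a_i$ and adds the triangle $a_1a_2a_3$ in the interior of~$T$. Thus $V(G')=V(G)\cup\{a_1,a_2,a_3\}$, every edge of $G$ outside $T$ survives unchanged in $G'$, and the crown contributes the six \emph{spoke} edges $v_ia_i,\ a_iv_{i+1}$ and the three \emph{inner} edges $a_ia_{i+1}$. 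One checks directly that $G'$ is again $4$-regular and still has facial triangles (e.g.\ $a_1a_2a_3$ and the three ``ears'' $v_ia_ia_{i-1}$), so iterated crowning makes sense. The one structural property of the crown I will use is this: \emph{in every proper $3$-coloring of $G'$ the colors of $v_1,v_2,v_3$ are pairwise distinct}, and \emph{conversely every coloring of $\{v_1,v_2,v_3\}$ by three distinct colors extends uniquely over the crown}. Indeed, the inner triangle $a_1a_2a_3$ uses all three colors, each ear $v_ia_ia_{i-1}$ then forces $c(v_i)$ to equal the color of the unique $a_j$ non-adjacent to $v_i$, so $\{c(v_1),c(v_2),c(v_3)\}$ is the whole palette; conversely the colors of $a_1,a_2,a_3$ are then forced and one checks they form a rainbow triangle.

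For~(i): $\chi(G')\le 4$ since $G'$ is planar (Four Color Theorem, or Brooks' theorem applied to the $4$-regular graph $G'$). If $c$ were a proper $3$-coloring of $G'$, then by the structural property $c(v_1),c(v_2),c(v_3)$ are pairwise distinct, hence the restriction of $c$ to $V(G)\subseteq V(G')$ is proper on every edge of $G$: the edges outside $T$ are edges of $G'$, and each edge $v_iv_{i+1}$ of $T$ is fine because $c(v_i)\neq c(v_{i+1})$. This contradicts $\chi(G)=4$, so $\chi(G')=4$.

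For~(ii): first, $\chi(G'-e)\ge 3$ for every $e$, since the crown region contains four triangular faces and deleting one edge destroys at most two of them; so it suffices to produce a proper $3$-coloring of $G'-e$ in each case. If $e\in E(G)\setminus E(T)$, take a proper $3$-coloring of $G-e$ (it exists as $G$ is $4$-edge-critical); here $T$ is intact, so $v_1,v_2,v_3$ receive three distinct colors, which by the structural property extends over the crown and yields a proper $3$-coloring of $G'-e$. If $e$ is a crown edge, the idea is to feed in a $3$-coloring of $G-f$ for a suitably chosen edge $f$ of $T$: in any proper $3$-coloring of $G-f$ the two endpoints of $f$ \emph{must} receive the same color (otherwise $G$ itself would be $3$-colorable), so exactly two of $v_1,v_2,v_3$ are monochromatic, and this controlled degeneracy is precisely what leaves room to color the rest of the crown once $e$ is gone. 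By the dihedral symmetry of the crown it suffices to treat one spoke edge and one inner edge: for $e=v_1a_1$ one checks that a $3$-coloring of $G-v_1v_3$ (so $c(v_1)=c(v_3)\neq c(v_2)$) extends to $G'-e$, and for $e=a_1a_2$ the \emph{same} $3$-coloring of $G-v_1v_3$ extends to $G'-e$ — the only difference is that $a_1$ and $a_2$ may now take equal colors, which is exactly what makes the extension go through. The remaining spoke and inner edges follow by symmetry; this establishes (ii), and together with~(i) it shows that $\crown{G}{T}$ is $4$-edge-critical.

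The genuinely routine (but fiddly) step is the last one: pairing each crown edge with the correct edge $f$ of $T$ to delete from $G$, and checking by hand that the forced colors on $a_1,a_2,a_3$ are consistent once $e$ is removed. The conceptual heart is a rigidity/flexibility dichotomy: the crown forces $v_1,v_2,v_3$ to be pairwise differently colored, so a $3$-coloring of all of $G'$ would push a $3$-coloring back onto $G$; but deleting any single edge relaxes exactly one of the constraints responsible for this rigidity, and the $4$-edge-criticality of $G$ supplies the $3$-coloring needed to complete the rest.
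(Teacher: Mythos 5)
First, a point of context: the paper does not prove this proposition itself --- it is imported verbatim from Koester~\cite{Koester1990} --- so there is no in-paper proof to compare against, and your attempt has to stand on its own as a proof of Koester's result.

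The central problem is that you have analyzed the wrong operation. You define $\crown{G}{T}$ by subdividing each edge of $T$ once and joining the three subdivision vertices into an inner triangle, i.e.\ you replace the facial triangle by an octahedron minus a face, adding exactly three new vertices. Koester's crowning is a larger gadget: the paper states explicitly (in the proof of Lemma~\ref{lem:crowning_chi_f}) that $V(\crown{G}{T})\setminus V(G)$ consists of \emph{nine} new vertices when $T$ is a triangle, and Figure~\ref{fig:krown} depicts this richer structure (roughly three new vertices per boundary edge, which is also what makes the pentagon case add fifteen). Everything downstream of your gadget description --- the claim that rainbow colorings of $\{v_1,v_2,v_3\}$ extend \emph{uniquely}, the classification of crown edges into one spoke orbit and one inner orbit, and the pairing of each deleted crown edge with an edge $f$ of $T$ --- is a computation inside your three-vertex gadget and does not transfer to the actual twelve-vertex crown, which has more edge orbits and a different local coloring behaviour. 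So while your argument appears to correctly show that \emph{your} operation preserves $4$-edge-criticality (a true and not uninteresting statement), it does not establish the proposition as stated.

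That said, the architecture of your argument is the right one and is essentially how such results are proved: (a) the crown is rigid enough that any $3$-coloring of $\crown{G}{T}$ would force pairwise distinct colors on $v_1,v_2,v_3$ and hence restrict to a $3$-coloring of $G$, giving $\chi(\crown{G}{T})=4$; (b) for a deleted edge inside the crown one feeds in a $3$-coloring of $G-f$ for a suitable edge $f$ of $T$, exploiting that such a coloring necessarily identifies the colors of the endpoints of $f$. To repair the proof you would need to take the actual crown from Figure~\ref{fig:krown}, re-verify the rigidity statement in (a) for it, enumerate its edge orbits, and redo the extension check in (b) for each orbit. Until that is done, the case analysis --- which is the entire content of the proposition --- is missing.
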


Via the following lemma, we can use Koester's crowning operation to extend the
example from Figure~\ref{fig:crowning1} to an infinite family of $4$-regular
$4$-edge-critical planar graphs with fractional chromatic number $3$.

\begin{figure}[htb]
  \hbox{} \hfill
  \begin{subfigure}[b]{.4\textwidth}
    \centering \includegraphics[page=1,width=\textwidth]{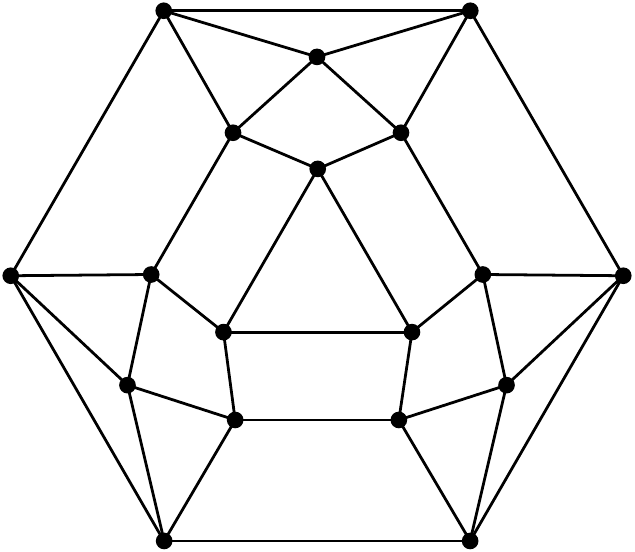}
    \caption{}
    \label{fig:crowning1}
  \end{subfigure}
  \hfill
  \begin{subfigure}[b]{.4\textwidth}
    \centering \includegraphics[page=2,width=\textwidth]{figs/crowning}
    \caption{}
    \label{fig:crowning2}
  \end{subfigure}
  \hfill \hbox{}
  \caption{ \subref{fig:crowning1}~A 4-edge-critical 4-regular 18-vertex
    planar graph with $\chi = 4$ and $\chi_f=3$ and \subref{fig:crowning2}~the
    crowning extension at its center triangular face.  }
  \label{fig:crowning}
\end{figure}

\begin{lemma}\label{lem:crowning_chi_f}
  Let $G$ be a $4$-regular plane graph with a facial triangle $T$.  If
  $\chi_f(G)=3$, then $\chi_f(\crown{G}{T})=3$.
\end{lemma}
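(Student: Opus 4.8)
The plan is to show that the crowning operation, applied at a facial triangle, preserves the upper bound $\chi_f \le 3$; combined with the trivial lower bound $\chi_f(\crown{G}{T}) \ge \omega \ge 3$ coming from the triangle $T$ (or any triangle that survives), this gives equality. The natural framework is $b$-fold colorings: since $\chi_f(G)=3$ and the fractional chromatic number of a finite graph is attained, there is an integer $b$ and a proper $b$-fold coloring of $G$ with exactly $3b$ colors. I would fix such a coloring $c$ and try to extend it across the gadget that crowning inserts in place of the triangle $T$.

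First I would describe the crowning gadget explicitly at a triangle: crowning replaces the facial triangle $T = xyz$ (and a thin neighborhood of it) by a larger $4$-regular patch in which $x,y,z$ keep their outward connections but pick up new neighbors on the inserted structure, and a new facial triangle appears (this is exactly the content of Figure~\ref{fig:krown}, left). The key combinatorial point to extract is: the boundary of the patch, as seen from the rest of $G$, is unchanged — the three vertices $x,y,z$ still pairwise "see" the patch, and each retains degree $4$ after the gadget is attached. So the coloring $c$ restricted to $V(G)\setminus\{x,y,z\}$ together with the colors $c(x),c(y),c(z)$ is a valid partial $b$-fold coloring of $\crown{G}{T}$, and the task reduces to a purely local one: given three pairwise-disjoint $b$-sets $S_x, S_y, S_z \subseteq \{1,\dots,3b\}$ (disjoint because $xyz$ was a triangle in $G$, hence $S_x\cup S_y\cup S_z$ is all $3b$ colors), can one $b$-fold color the interior vertices of the gadget using colors from $\{1,\dots,3b\}$, respecting the already-fixed sets on the three corners and properness on the gadget?

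The heart of the argument is therefore this local extension claim, and I expect it to be the main obstacle — essentially one must check that the crowning gadget is "$3$-fractionally-list-extendable" from any proper $b$-fold precoloring of its three corner vertices with three disjoint $b$-sets covering all $3b$ colors. I would verify it by exhibiting an explicit $b$-fold coloring of the interior in terms of $S_x,S_y,S_z$: because the corner sets partition $[3b]$, the situation is symmetric, and one can likely write each interior vertex's color set as a union of full blocks $S_x$, $S_y$, $S_z$ (and at worst as controlled "halves" of them when $b$ is even, or after passing from $b$ to $2b$, which does not change $\chi_f$), so that adjacency inside the gadget becomes the statement that a proper $3$-coloring of the gadget-minus-corners exists with prescribed colors on the attachment points — which is exactly a finite check on the fixed small gadget. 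Concretely: since the gadget has a natural $3$-coloring compatible with $T$ being a triangle (crowning preserves a $2$-face-coloring / $\triangle$-structure, so the gadget is $3$-colorable with the three corners getting the three distinct colors, in any of the $6$ orders), pulling this back through the correspondence color-$i \leftrightarrow$ one of the sets $S_x,S_y,S_z$ gives the desired $b$-fold extension. The only subtlety is matching which color-class of the gadget's $3$-coloring goes to which corner: one must have the freedom to realize the specific assignment $x\mapsto S_x, y\mapsto S_y, z\mapsto S_z$, and I would handle this by noting the gadget admits a proper $3$-coloring for every one of the $3!$ assignments of colors to $\{x,y,z\}$ (again a finite check, visible in Figure~\ref{fig:krown}), so whatever $S_x,S_y,S_z$ are, the correct gadget $3$-coloring exists.

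Assembling these pieces: starting from a proper $3b$-coloring of $G$, restrict to $V(G)\setminus\{x,y,z\}$, keep $c(x),c(y),c(z)$, and fill in the gadget interior via the $3$-coloring-pullback above; the result is a proper $b$-fold coloring of $\crown{G}{T}$ with $3b$ colors, so $\chi_f(\crown{G}{T}) \le 3b/b = 3$. For the reverse inequality, $\crown{G}{T}$ contains a triangle (the new facial triangle produced by crowning, or any surviving one), so $\chi_f(\crown{G}{T}) \ge \omega(\crown{G}{T}) \ge 3$ by~(\ref{equation:alpha-chi-relation}). Hence $\chi_f(\crown{G}{T}) = 3$, which is what we wanted. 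This lemma, together with Proposition~\ref{prop:koester_edge_critical} applied iteratively to the $4$-edge-critical $4$-regular graph of Figure~\ref{fig:crowning1}, then yields Theorem~\ref{thm:chif3}.
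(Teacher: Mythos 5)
Your proposal is correct and follows essentially the same route as the paper: take a $b$-fold coloring of $G$ with $3b$ colors, observe that the triangle forces the three corner sets to partition $[3b]$, and extend across the crowning gadget by pulling back a proper $3$-coloring of the gadget that assigns distinct colors to the three corners, then get the lower bound from a surviving triangle. The only difference is presentational: your worry about needing "halves" of the color blocks and about realizing all $3!$ corner assignments is unnecessary, since a single $3$-coloring of the gadget with distinct corner colors suffices (the blocks $A_1,A_2,A_3$ can simply be relabeled), which is exactly the one-line finite check the paper invokes.
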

\begin{proof}
  If $\chi_f(G)=3$, then it follows from the representation of $\chi_f(G)$ as
  the optimal value of a rational linear program that there exists
  $b \in \mathbb{N}$ such that $G$ has a $b$-coloring using $3b$ colors. For
  every vertex $v \in V(G)$, let $c(v) \in \binom{[3b]}{b}$ be the assigned
  sets of colors. Let $T=uvw$, then we know that $c(u), c(v), c(w)$ must be
  pairwise disjoint and hence form a partition of $\{1,\ldots,3b\}$. Let
  $c(u)=A_1$, $c(v)=A_2$, and $c(w)=A_3$. It is easy to see that the subgraph
  of $\crown{G}{T}$ induced by the vertices $u,v,w$ and the nine new vertices in
  $V(\crown{G}{T}) \setminus V(G)$ is 3-colorable such that the colors of $u,v,w$
  are pairwise distinct. By appropriately replacing the 3 colors by $A_1,A_2,A_3$
  we obtain a $b$-coloring of $\crown{G}{T}$ with $3b$ colors. This proves
  $\chi_f(\crown{G}{T}) \le 3$, now $\chi_f(\crown{G}{T}) = 3$ follows because $\crown{G}{T}$
  contains a triangle.
\end{proof}

Starting with a facial triangle in the $4$-regular $4$-edge-critical graph
of Figure~\ref{fig:crowning} and repeating the crowning operation
(which by definition preserves the existence of a facial triangle), by
Lemma~\ref{lem:crowning_chi_f} and
Proposition~\ref{prop:koester_edge_critical} we obtain an infinite family of
$4$-edge-critical $4$-regular planar graphs $G$ with fractional chromatic
number $\chi_f(G)=3$. This proves Theorem~\ref{thm:chif3}.

\section{Discussion}
\label{sec:discussion}

With Theorem~\ref{thm:3_colorable_arr} we gave a proof of
Conjecture~\ref{conj:FHNS} for $\triangle$-saturated great-pseudocircle
arrangements. While this is a very small subclass of great-pseudocircle
arrangements, it is reasonable to think of it as a ``hard'' class for
3-coloring.  The rationale for such thoughts is that triangles restrict the
freedom of extending partial colorings.  Our computational data indicates that
sufficiently large intersecting pseudocircle arrangements that are
\emph{diamond-free}, i.e., no two triangles of the arrangement share an edge,
are also 3-colorable.  Computations also suggest that sufficiently large
great-pseudocircle arrangements have \emph{antipodal colorings}, i.e.,
3-colorings where antipodal points have the same color.  Based on the
experimental data we propose the following strengthened variants of
Conjecture~\ref{conj:FHNS}.

\begin{conjecture}\label{conjecture:strenghened}
The following three statements hold:
  \begin{enumerate}[(a)]
  \item
    \label{conjecture:diamondfree_pwi_chi3}
    Every simple diamond-free intersecting arrangement of $ n \ge 6 $
    pseudocircles is 3-colorable.
  \item
    \label{conjecture:large_pwi_chi3}
    Every simple intersecting arrangement of sufficiently many pseudocircles
    is 3-colorable.
  \item
    \label{conjecture:antipodal3colorable}
    Every simple arrangement of $n \ge 7$ great-pseudocircles has an antipodal
    3-coloring.
  \end{enumerate}
\end{conjecture}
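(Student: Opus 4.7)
The plan is to attack the three parts of Conjecture~\ref{conjecture:strenghened} separately, since they appear to require quite different techniques, and to combine computer-assisted base cases with structural/inductive arguments in each case.

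For part~(\ref{conjecture:antipodal3colorable}), I would exploit antipodal symmetry directly. A great-pseudocircle arrangement on the sphere is invariant under the antipodal map, so it descends via the quotient $S^2\to \mathbb{RP}^2$ to an arrangement of pseudolines in the projective plane, and an antipodal 3-coloring of the spherical arrangement graph corresponds precisely to a proper 3-coloring of this projective quotient graph. Hence part~(\ref{conjecture:antipodal3colorable}) reduces to showing that the projective arrangement graph of any arrangement of $n\ge 7$ pseudolines is 3-colorable. I would first dispose of the small cases by exhaustive enumeration (the bound $n\ge 7$ signals small exceptions for $n\le 6$) and then try induction on $n$: remove a pseudoline, 3-color the smaller quotient arrangement, and extend. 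The main obstacle is that reinserting a pseudoline typically splits several edges and forces modifications through long Kempe chains; making these chains terminate inside the projective plane (rather than wrapping around) is where most of the real work will lie.

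For part~(\ref{conjecture:diamondfree_pwi_chi3}), the diamond-free condition forces triangles to be pairwise edge-disjoint but no longer edge-covering, so the Tait-theorem approach of Theorem~\ref{thm:3_colorable_arr} does not apply verbatim. My plan is to generalize that proof by considering the auxiliary plane multigraph $H$ whose vertices are the faces in one class of the bipartite face 2-coloring and whose edges record vertex-adjacencies of the arrangement; for diamond-free arrangements $H$ is subcubic but no longer cubic. I would attempt to extend the ``forbidden color at each triangle'' idea to a list-edge-coloring problem on $H$ with flexible lists at non-triangular faces, and then invoke a version of Vizing's or Galvin's theorem. The hard part will be controlling the non-triangular faces: edges of the arrangement graph not lying on any triangle have no local color constraint, and handling them will likely demand a discharging argument, exploiting that intersecting pseudocircle arrangements have many triangles globally (cf.\ the literature following \cite{Gruenbaum1972}).

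For part~(\ref{conjecture:large_pwi_chi3}), the starting point must be Theorem~\ref{theorem:chifrac_3_2n}, which gives $\chi_f(\AA)\le 3+O(1/n)$ for intersecting arrangements. However, this alone is insufficient, since Theorem~\ref{thm:chif3} exhibits an infinite family of 4-regular planar graphs with $\chi=4$ and $\chi_f=3$, so no purely fractional rounding can close the gap. The strategy I would pursue is to classify the 4-chromatic obstructions specific to \emph{intersecting pseudocircle} arrangements. Every known 4-chromatic example (Figure~\ref{fig:n5_pwi_chi4_1}, the corona extensions of Section~\ref{sec:corona}, and the sporadic small examples) exhibits a very rigid local structure, typically involving odd faces surrounded by many triangles. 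I would aim to show that in any intersecting arrangement of sufficiently many pseudocircles one can always find a reducible configuration: a vertex, edge, or short pseudocircle cap whose removal leaves a graph from which any 3-coloring extends back. The main obstacle is ruling out all 4-chromatic obstructions simultaneously; to make this tractable, I would try to first prove the weaker statement that some natural parameter (for instance the number of non-triangular faces, or the maximum face size) must grow with $n$, and then show that any obstruction requires a bounded-size ``rigid core'' that cannot coexist with such growth.
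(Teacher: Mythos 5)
There is a fundamental mismatch here: the statement you are addressing is Conjecture~\ref{conjecture:strenghened}, which the paper does not prove. It is offered as an open strengthening of Conjecture~\ref{conj:FHNS}, supported only by exhaustive computation on small arrangements, so there is no proof in the paper to compare against, and your submission does not supply one either. What you have written is a research programme: for each of the three parts you describe a plausible line of attack and then explicitly flag the step that would constitute the actual proof as an unresolved ``main obstacle''. In part~(\ref{conjecture:antipodal3colorable}) the passage to the projective quotient is sound (antipodal 3-colorings of a great-pseudocircle arrangement correspond exactly to proper 3-colorings of the quotient pseudoline arrangement graph in $\mathbb{RP}^2$), but this is only a restatement of the problem; the induction step --- reinserting a pseudoline and repairing the coloring via Kempe chains that do not wrap around the projective plane --- is precisely where all the difficulty lives, and nothing is proved about it. In part~(\ref{conjecture:diamondfree_pwi_chi3}) the analogue of the auxiliary graph $H$ from Theorem~\ref{thm:3_colorable_arr} loses the key bijection: in the $\triangle$-saturated case every arrangement vertex corresponds to an edge of $H$, whereas in a diamond-free arrangement the vertices not shared by two triangles of the chosen face class correspond to no edge of $H$ at all, so an edge-coloring of $H$ (whether via Tait, Vizing or Galvin) does not induce a vertex coloring of the arrangement graph; the proposed discharging argument to handle these vertices is not even sketched. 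In part~(\ref{conjecture:large_pwi_chi3}) you correctly observe that Theorem~\ref{theorem:chifrac_3_2n} cannot be rounded (Theorem~\ref{thm:chif3} shows $\chi_f=3$ is compatible with $\chi=4$), but the proposed classification of ``reducible configurations'' and ``rigid cores'' is entirely programmatic, with no candidate configurations, no unavoidability statement, and no reducibility proof.

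So the verdict is that there is a genuine gap --- indeed, the gap is the whole theorem: none of the three parts is established, and each of your three plans stalls exactly at the point where a new idea would be required. That is not a criticism of the plans themselves (the projective reformulation of~(\ref{conjecture:antipodal3colorable}) and the attempt to relax the $\triangle$-saturated argument toward~(\ref{conjecture:diamondfree_pwi_chi3}) are reasonable starting points, consistent with how the paper itself frames the evidence), but the statement remains a conjecture, and your text should be presented as a discussion of possible approaches rather than as a proof.
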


{\small
	\bibliographystyle{alphaabbrv-url}
	\bibliography{bibliography}
}
\end{document}